% !TeX document-id = {dce49eab-fd56-4bca-a17f-a75f6d141a9a}
% !TEX TS-program = XeLaTeX
%\documentclass{ieeetran}
%\documentclass[journal,9pt]{IEEEtran}
%\documentclass[12pt,draftcls,onecolumn]{ieeetran}
%\usepackage[a4paper,includeheadfoot,inner=1.5cm,outer=2.5cm,top=1.5cm,bottom=1.8cm]{geometry}
\documentclass[5p]{elsarticle}

\topmargin -3.5cm %\textwidth17.1cm
\textheight26.5cm

%To let footer vanish
\usepackage{lipsum}
\makeatletter
\def\ps@pprintTitle{%
 \let\@oddhead\@empty
 \let\@evenhead\@empty
 \def\@oddfoot{}%
 \let\@evenfoot\@oddfoot}
\makeatother

\usepackage[T1]{fontenc}
\usepackage{comment}

%\smartqed
%\usepackage{showkeys}

%\numberwithin{equation}{section}

%\usepackage{showkeys}
%\usepackage[active]{srcltx}
\usepackage{amsmath}
\usepackage{amssymb}
\usepackage{amsfonts}
\usepackage{amsthm}
\usepackage{tikz}
\usepackage{pgfplots}
\usepackage{graphicx,pst-all}
\usepackage{subfig}
\usepackage{enumerate}
\usepackage[shortlabels]{enumitem}
\usepackage{caption}
%\captionsetup{font=footnotesize}
%\usepackage{float}
%\restylefloat{figure}
%\usepackage{longtable}
%\usepackage{verbatim}
%\usepackage{relsize}
%\usepackage{exscale}
%\usepackage[title]{appendix}
%\usepackage{widetext}

%\DeclareMathAlphabet{\mathpzc}{OT1}{pzc}{m}{it}
%\newcommand{\cCl}{\scalebox{1.27}{$\mathpzc{C}$}}
%%\newcommand{\cC}{\scalebox{.85}{$\mathpzc{C}$}}
%\newcommand{\cCs}{\scalebox{.7}{$\mathpzc{C}$}}
%%\newcommand{\cI}{\scalebox{.7}{$\mathcal{I}$}}
%\newcommand{\cIs}{\scalebox{.6}{$\mathcal{I}$}}
%\newcommand{\cIl}{\scalebox{1.27}{$\mathcal{I}$}}
%\newcommand{\cLl}{\scalebox{1.27}{$\mathpzc{L}$}}
%\newcommand{\cLs}{\scalebox{.7}{$\mathpzc{L}$}}
%%\newcommand{\cL}{\scalebox{.85}{$\mathpzc{L}$}}
%\newcommand{\cRl}{\scalebox{1.27}{$\mathpzc{R}$}}
%%\newcommand{\cR}{\scalebox{.85}{$\mathpzc{R}$}}
%\newcommand{\cRs}{\scalebox{.7}{$\mathpzc{R}$}}
%\newcommand{\cGl}{\scalebox{1.27}{$\mathpzc{G}$}}
%%\newcommand{\cG}{\scalebox{.85}{$\mathpzc{G}$}}
%\newcommand{\cGs}{\scalebox{.7}{$\mathpzc{G}$}}
%%\newcommand{\cV}{\scalebox{.85}{$\mathpzc{V}$}}
%\newcommand{\cVs}{\scalebox{.7}{$\mathpzc{V}$}}
%\newcommand{\cVl}{\scalebox{1.27}{$\mathpzc{V}$}}
%%\newcommand{\ce}{\scalebox{.85}{$\mathpzc{e}$}}
%%\newcommand{\cn}{\scalebox{.85}{$\mathpzc{n}$}}
%%\newcommand{\cl}{\scalebox{.85}{$\mathpzc{l}$}}

\newcommand{\cC}{\mathcal{C}}
\newcommand{\cB}{\mathcal{B}}

\newcommand{\cA}{\mathcal{A}}

\newcommand{\cL}{\mathcal{L}}

\newcommand{\cD}{\mathcal{D}}

\newcommand{\cX}{\mathcal{X}}
\newcommand{\cY}{\mathcal{Y}}

%\makeindex             % used for the subject index
                       % please use the style svind.ist with
                       % your makeindex program

%\oddsidemargin-0.5cm \evensidemargin0cm \topmargin-1.0cm \textwidth17.1cm \textheight23cm
%\setlength{\parindent}{0mm}

%\def \theequation {\arabic{section}.\arabic{equation}}
\usetikzlibrary{arrows}
\usetikzlibrary{fit}\usetikzlibrary{calc}
  \pgfdeclarelayer{background}
  \pgfsetlayers{background,main}

\theoremstyle{theorem}
\newtheorem{Prop}{Proposition}[section]
\newtheorem{Lem}[Prop]{Lemma}
\newtheorem{Thm}[Prop]{Theorem}

\theoremstyle{definition}

\newtheorem{Rem}[Prop]{Remark}

\newcommand{\eps}{\varepsilon}

\newcommand{\N}{{\mathbb{N}}}

\newcommand{\R}{{\mathbb{R}}}
\newcommand{\C}{{\mathbb{C}}}
\newcommand{\K}{{\mathbb{K}}}

\newcommand{\setdef}[2]{\left\{\, #1 \left|\, \vphantom{#1} #2\right.\right\}}

\newcommand{\ds}[1]{{\rm \, d} #1 \,}

\DeclareMathOperator{\RE}{Re}

\DeclareMathOperator{\loc}{loc}

\newenvironment{smallpmatrix}%          environment name
{\left(\begin{smallmatrix}}%            begin code
{\end{smallmatrix}\right)}%             end code

\newenvironment{smallbmatrix}%          environment name
{\left[\begin{smallmatrix}}%            begin code
{\end{smallmatrix}\right]}%             end code

%\renewcommand\proof[1][\proofname]{%
%  \par
%  \pushQED{\qed}%
%  \normalfont \topsep 6pt plus 6pt\relax
%  \trivlist
%  \item[\hskip\labelsep
%        \bfseries #1:]\ignorespaces
%}

%\newcommand {\proof} {\par{\it Proof}. \ignorespaces}
%\newcommand {\eproof}
%      {\space
% {\hfill \vbox{\hrule\hbox{\vrule height1.3ex\hskip0.8ex\vrule}\hrule}}
%        \par}

\def\tb#1{\textcolor[rgb]{0.00,0.00,0.00}{#1}}

 %changes the appearence in the enumerate environment
 %changes the appearence at \ref-commands

\sloppy

%%%%%%%%%%%%%%%%%%%%%%%%%%%%%%%%%%%%%%%%%%%%%%%%%%%%%%%%%%%%%%%%%%%%%%%%%%%%%%%%%%%%%%%

\begin{document}

\begin{frontmatter}

\title{Funnel control for a moving water tank\tnoteref{thanks}}
\tnotetext[thanks]{This work was supported by the German Research Foundation (Deutsche Forschungsgemeinschaft) via the grants BE 6263/1-1 and RE 2917/4-1.}

\author[Paderborn]{Thomas Berger}\ead{thomas.berger@math.upb.de}
\author[Hamburg]{Marc Puche}\ead{marc.puche@uni-hamburg.de}
\author[Hamburg,Twente]{Felix L.~Schwenninger}\ead{f.l.schwenninger@utwente.nl}

\address[Paderborn]{Institut f\"ur Mathematik, Universit\"at Paderborn, Warburger Str.~100, 33098~Paderborn, Germany}
\address[Hamburg]{Fachbereich Mathematik, Universit\"at Hamburg, Bundesstra{\ss}e~55, 20146~Hamburg, Germany}
\address[Twente]{Department of Applied Mathematics, University of Twente, P.O.~Box 217,
7500 AE Enschede, The Netherlands\vspace*{-0.8cm}}

\begin{keyword}
%shallow water equations;
Saint-Venant equations;
sloshing;
well-posed systems;
adaptive control;
funnel control.
\end{keyword}

\begin{abstract}
We study tracking control for a nonlinear moving water tank system modeled by the linearized Saint-Venant equations, where the output is given by the position of the tank and the control input is the force acting on it. For a given reference signal, the objective is that the tracking error evolves within a pre-specified performance funnel.
Exploiting recent results in funnel control, this can be achieved by showing that \textit{inter alia} the system's internal dynamics are bounded-input, bounded-output stable.
%\fs{Exploiting recent results in funnel control it suffices to show that the operator associated with the internal dynamics of the system is causal, locally Lipschitz continuous and maps bounded functions to bounded functions.}
\end{abstract}

\end{frontmatter}

%%%%%%%%%%%%%%%%%%%%%%%%%%%%%%%%%%%%%%%%%%%%%%%%%%%%%%%%%%%%%%%%%%%%%%%%%%%%%%%%%%%%%%%%%%%%%%%%%%%%%%%%%

%\newpage
%
%%%%%%%%%%%%%%%%%%%%%%%%%%%%%%%%%%%%%%%%%%%%%%%%%%%%%%%%%%%%%%%%%%%%%%%%%%%%%%%%%%%%%%%%%%%%%%%%%%%%%%%%%%%%%
\section{Introduction}\label{Sec:Intr}
%%%%%%%%%%%%%%%%%%%%%%%%%%%%%%%%%%%%%%%%%%%%%%%%%%%%%%%%%%%%%%%%%%%%%%%%%%%%%%%%%%%%%%%%%%%%%%%%%%%%%%%%%%%%%
%

When a liquid-filled containment is subject to movement, the motion of the fluid may have a significant effect on the dynamics of the overall system and is known as {\it sloshing}. The latter phenomenon can be understood as internal dynamics of the system and it is of great importance in a range of applications such as aeronautics and control of containers and vehicles, and has been studied in engineering for a long time, see e.g.~\cite{CardMati17,FeddDohr97,GrahRodr51,GrunBern99,VenuBern96,YanoYosh96}.

The standard model for the one-dimensional movement of a fluid is given by the Saint-Venant equations, which is a system of nonlinear hyperbolic partial differential equations (PDEs). Models of a moving water tank involving these equations without friction have been studied in various articles, where the control is the acceleration and the output is the position of the tank. The first approach appears in~\cite{DuboPeti99} where a flat output for the linearized model is constructed. Several additional control problems related to this model are studied in~\cite{PetiRouc02} and it is proved that the linearization is steady-state controllable. Moreover, the seminal work~\cite{Coro02} shows that the nonlinear model is locally controllable around any steady state. %However, as an interesting addition, in~\cite{ChitCoro06} it is shown that the two-dimensional version is not locally controllable under some generic condition, where the control acts on the boundary and only depends on time.
Different stabilization approaches by state and output feedback using Lyapunov functions are studied in~\cite{PrieHall04}. In~\cite{AuroBonn11} observers are designed to estimate the horizontal currents by exploiting the symmetries in the Saint-Venant equations. Convergence of the estimates to the actual states is studied for the linearized model. In~\cite{CardMati17} a port-Hamiltonian formulation of the system is provided as a mixed finite-infinite dimensional system. For a recent numerical treatment of a truck with a fluid basin see e.g.~\cite{GerdKimm15}.

In this note we consider output trajectory tracking for moving water tank systems by funnel control. The concept of funnel control was developed in~\cite{IlchRyan02b}, see also the survey~\cite{IlchRyan08}. The funnel controller is an adaptive controller of high-gain type and proved its potential for tracking problems in various applications, such as temperature control of chemical reactor models~\cite{IlchTren04}, control of industrial servo-systems~\cite{Hack17} and underactuated multibody systems~\cite{BergOtto19}, voltage and current control of electrical circuits~\cite{BergReis14a}, control of peak inspiratory pressure~\cite{PompWeye15} and adaptive cruise control~\cite{BergRaue20}. We like to emphasize that the funnel controller is a model-free feedback controller, i.e., it does not require specific system parameters for feasibility. This makes it a suitable choice for the application to the water tank system, for which we assume that it contains a non-vanishing friction term as modeled in the Saint-Venant equations e.g.\ in~\cite{BastCoro16}, but the exact shape/magnitude of this term is unknown and not available to the controller.

%It is our aim to show that the funnel controller introduced in~\cite{BergLe18a} is feasible for these systems.
While funnel control is known to work for a large class of functional differential equations with higher relative degree as shown in~\cite{BergLe18a} (cf.\ also Section~\ref{Sec:FunCon}), it is often not clear if a particular system involving internal dynamics governed by PDEs are encompassed by these results. Recently~\cite{BergPuch20a}, we have outlined an abstract framework to answer this question affirmatively. In the present work we follow this approach to show that tracking with prescribed transient behaviour of the moving tank --- subject to sloshing effects modeled via the linearized shallow water equations --- can indeed be achieved by funnel control.

%%%%%%%%%%%%%%%%%%%%%%%%%%%%%%%%%%%%%%%%%%%%%%%%%%%%%%%%%%%%%%%%%%%%%%%%%%%%%%%%%%%%%%%%%%%%%%%%%%%%%%%%%%%%%
\subsection{Nomenclature}\label{Ssec:Nomencl}
%%%%%%%%%%%%%%%%%%%%%%%%%%%%%%%%%%%%%%%%%%%%%%%%%%%%%%%%%%%%%%%%%%%%%%%%%%%%%%%%%%%%%%%%%%%%%%%%%%%%%%%%%%%%%

In the following let $\N$ denote the natural numbers, $\N_0 = \N \cup\{0\}$, and $\R_{\ge 0} =[0,\infty)$. We write $\C_{\omega}=\setdef{\lambda\in\C}{\RE \lambda>\omega}$ for $\omega\in\R$ and $\C_+=\C_0$. For a Hilbert space $X$, $L^p(I;X)$ denotes the usual Lebesgue--Bochner space of (strongly) measurable functions $f:I\to X$, $I\subseteq\R$ an interval, where $p\in[1,\infty]$. % and $\K$ is either~$\R$ or~$\C$.
%$L^\infty(I;\K^n)$ denotes the Lebesgue space of all measurable and essentially bounded functions $f:I\to\K^n$.
We write $\|\cdot\|_\infty$ for $\|\cdot\|_{L^\infty(\R_{\ge 0};X)}$. By $L^\infty_{\loc}(I;X)$ we denote the set of measurable and locally essentially bounded functions $f:I\to X$, by $W^{k,p}(I;X)$, $k\in\N_0$, the Sobolev space of $k$-times weakly differentiable functions $f:I\to X$ such that $f,\dot f,\ldots, f^{(k)}\in L^p(I;X)$, and by $\cC^k(I;X)$ the set of $k$-times continuously differentiable functions $f:I\to X$, $k\in\N_0\cup\{\infty\}$, where $\cC(I;X):=\cC^0(I;\K^n)$.
%We further use the abbreviation $H^k(I;\K^n):= W^{k,2}(I;\K^n)$.
%For $\omega\in\R$ we use the notation $L_{\omega}^{2}(\R_{\ge 0};\K) := \setdef{\ee^{\omega\cdot}f(\cdot)}{f\in L^{2}(\R_{\ge 0};\K)}$ with norm $\|\ee^{\omega\cdot}f\|_{L_{\omega}^{2}}=\|f\|_{L^{2}(\R_{\ge 0};\K)}$.
 By $\cB(\cX;\cY)$, where $\cX, \cY$ are Hilbert spaces, we denote the set of all bounded linear operators $\cA:\cX\to\cY$. The  symbol ``$\lesssim$'' is a placeholder for ``$\leq c\,\cdot$'' where the multiplicative constant $c$ is independent of the variables occurring in the inequality.

%%%%%%%%%%%%%%%%%%%%%%%%%%%%%%%%%%%%%%%%%%%%%%%%%%%%%%%%%%%%%%%%%%%%%%%%%%%%%%%%%%%%%%%%%%%%%%%%%%%%%%%%%%%%%
\subsection{The Model}\label{Ssec:MathModel}
%%%%%%%%%%%%%%%%%%%%%%%%%%%%%%%%%%%%%%%%%%%%%%%%%%%%%%%%%%%%%%%%%%%%%%%%%%%%%%%%%%%%%%%%%%%%%%%%%%%%%%%%%%%%%

In the present paper we study the horizontal movement of a water tank as depicted in Fig.~\ref{Fig:Tank}, where we neglect the wheels' inertia and friction between the wheels and the ground.
\begin{figure}[h!t]
  \centering
\resizebox{0.45\textwidth}{!}{
\begin{tikzpicture}

\draw (0.6,-0.55) circle (15pt); \draw (4.4,-0.55) circle (15pt);
\draw[line width=1.1pt] (0,2) -- (0,0) -- (5,0) -- (5,2);

\draw[->,line width=1.5pt] (-1.5,1) -- (0,1)  node[midway, above]{\large $u(t)$};

\draw[dashed] (0,0) -- (0,-2);
\draw[->,line width=1.5pt] (-2.5,-2) -- (0,-2)  node[midway, above]{\large $y(t)$};
\draw[line width=1.5pt] (-2.5,-1.9) -- (-2.5,-2.1);

\draw[->,line width=1.5pt] (6,1) -- (6,0) node[midway, right]{\large $g$};

\draw (-2.5,-1.1) -- (7,-1.1);

\draw[line width=1.1pt, domain=0:5,smooth,variable=\x,blue] plot ({\x},{1*sqrt(3*\x)*sin(100*\x)*exp(-1*\x)+1});
\fill[color=blue!15,domain=0:5] (0,0)-- plot (\x,{1*sqrt(3*\x)*sin(100*\x)*exp(-1*\x)+1})--(5,0)-- (0,0);

\draw[red,line width=1.1pt,->] (0,0) -- (0,1) node[midway, right]{\large $h(t,\zeta)$};
\draw[red,line width=1.1pt,->] (0,0) -- (5,0) node[midway, below]{\large $\zeta$};
\node at (-0.15,-0.2) {\large \color{red} 0};
\node at (5.15,-0.2) {\large \color{red} 1};

\draw[line width=1.2pt] (2,0) -- (2,0.9);
\draw[->,line width=1.2pt] (2,0.45) -- (3,0.45) node[pos=1, right]{\large $v(t,\zeta)$};
\draw[line width=1.1pt] (5,0) -- (5,2);
\end{tikzpicture}
}
\caption{Horizontal movement of a water tank.}
\label{Fig:Tank}
\end{figure}
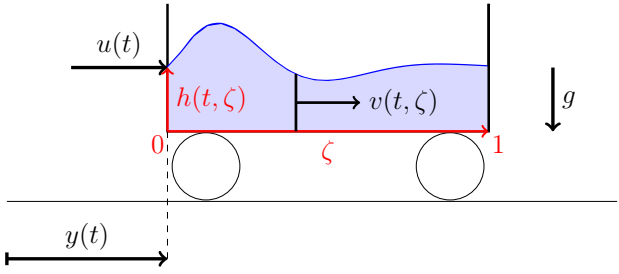
We assume that there is an external force acting on the water tank, which we denote by $u(t)$ as this will be the control input of the resulting system, cf.\ also Section~\ref{Ssec:ContrObj}. The measurement output is the horizontal position~$y(t)$ of the water tank, and the mass of the empty tank is denoted by $m$. The dynamics of the water under gravity~$g$ are described by the {\it Saint-Venant equations} (first derived in~\cite{Sain71}; also called one-dimensional {\it shallow water equations})
\begin{equation}\label{eq:SVeq}
\begin{aligned}
  \partial_t h + \partial_\zeta(h v)&= 0,\\
  \partial_t v +  \partial_\zeta \left( \frac{v^2}{2} + gh\right)+hS\left(\frac{v}{h}\right)&= - \ddot y
\end{aligned}
\end{equation}
with  boundary conditions $v(t,0) = v(t,1) = 0$. Here $h:\R_{\ge 0}\times [0,1]\to \R$ denotes the height profile and $v:\R_{\ge 0}\times [0,1]\to \R$ the (relative) horizontal velocity profile, where the length of the container is normalized to~$1$.
The friction term~$S:\R\rightarrow\R$ is typically modeled as the sum of a high velocity coefficient of the form $C_Sv^2/h^2$ and a viscous drag of the form $C_Dv/h$ for some positive constants $C_S,C_D$. In the present paper, we do not specify the function~$S$, but we do assume that $S(0)=0$ and $S'(0)>0$. The condition $S(0)=0$ means that, whenever the velocity is zero, then there is no friction. The condition $S'(0)>0$ means that the viscous drag does not vanish; this is the case in most real-world non-ideal situations, but sometimes neglected in the literature,
 see e.g.~\cite[Sec.~1.4]{BastCoro16}.

For a derivation of the Saint-Venant equations~\eqref{eq:SVeq} of a moving water tank we refer to~\cite{CardMati17,PetiRouc02}, see also the references therein. The friction term in the model is the general version of that used in~\cite[Sec.~1.4]{BastCoro16}. Let us emphasize that in our framework the input is the force acting on the water tank, which can be manipulated using an engine for instance. In contrast to this, in~\cite{Coro02, PetiRouc02} the acceleration of the tank is used as input, but this can usually not be influenced directly. Note that --- in the presence of sloshing ---  the applied force does not equal the product of the tank's mass and acceleration.
We also stress that, if the acceleration is used as input, then the input-output relation is given by the simple double integrator $\ddot y = u$, and the Saint-Venant equations~\eqref{eq:SVeq} do not affect this relation. %Of course, in~\cite{Coro02,PetiRouc02} controllability of the complete state including the Saint-Venant equations is considered, but here we study output tracking, which does not require to influence the complete state.

As shown in~\cite{DuboPeti99, PetiRouc02}, the linearization of the Saint-Venant equations is relevant in the context of control since it provides a model which is much simpler to solve (both analytically and numerically) and still is an insightful approximation for motion planning purposes.
 % and its solution is considered to be a good approximation of the solution of the nonlinear model, at least for small velocities.
% The analytic treatment of the nonlinear equations~\eqref{eq:SVeq} together with the controller~\eqref{eq:fun-con} is a hard problem, in particular the proof of existence of global solutions.
Therefore, we restrict ourselves to the linearization of~\eqref{eq:SVeq} around the steady state $(h_{0},0)=(\int_{0}^{1}h(0,\zeta)\mathrm{d}\zeta,0)$, given by

\begin{equation}\label{eq:SVlin}
    \partial_t z = Az +b \ddot y = -\begin{bmatrix}0&h_0 \partial_\zeta\\g\partial_\zeta &2\mu\end{bmatrix}z+\begin{pmatrix}0\\-1\end{pmatrix}\ddot{y}%-\big(h_0 \partial_\zeta z_2, g \partial_\zeta z_1 + S'(0) z_2\big)=P_1\partial_\zeta(\cH z) - \mu P_0 z +b \ddot y
\end{equation}
with boundary conditions $z_2(t,0) = z_2(t,1) = 0$,
$\mu = \tfrac12 S'(0) > 0$ and $b=(0,-1)^\top$.
The state space in which $z(t)$ evolves is
     $X=L^2([0,1];\R^2)$
%\end{equation}
and $A: \cD(A) \subseteq  X \to X$,
\begin{equation}\label{eq:domA}
    \cD(A) = \setdef{(z_1,z_2)\in X}{ \!\!\begin{array}{l} z_1,z_2\in W^{1,2}([0,1];\R),\\ z_2(0) = z_2(1) = 0\end{array}\!\!\!}.
\end{equation}
By conservation of mass in  \eqref{eq:SVlin}, $\int_{0}^{1}z_{1}(t,\zeta)\mathrm{d}\zeta=h_{0}$ for all $t\ge0$.
The model is completed by the momentum
\begin{equation}\label{eq:momentum}
    p(t) := m \dot y(t) + \int_0^1 z_1(t,\zeta) \big(z_2(t,\zeta) + \dot y(t)\big) \ds{\zeta},\ t\ge0.
\end{equation}
Substituting the absolute velocity $x_2=z_{2}+\dot{y}$ for $z_2$, $x_{1}=z_{1}$ and using the balance law $\dot p(t) = u(t)$ and~\eqref{eq:SVlin} we obtain
\begin{align*}
m \ddot y(t) &=   \frac{g}{2} x_1(t,\cdot)^2|_{0}^{1} +2\mu\langle x_{1}(t),x_{2}(t)\rangle-2\mu h_0\dot{y}(t)+u(t),
  \end{align*}
where $\langle f,g\rangle=\int_{0}^{1}f(s)g(s)\mathrm{d}s$.
%Therefore, the equation describing the input-output behavior reads
Altogether, the nonlinear model on the state space $X$ reads
\begin{subequations}\label{eq:InpOut}
\begin{align}\label{eq:InpOutlin}
%\begin{aligned}
	  \partial_t x &= A(x+b\dot{y}) \\%P_1\partial_\zeta\cH (x+b\dot{y}) - \mu P_0 (x +b \dot y)\\
 %   \dot{y}(t)&=x_2(t,0)=x_2(t,1)\label{eq:InpOutlin3} \\
 \!  m \ddot y(t) &= \tfrac{g}{2}  x_1(t,\cdot)^{2}|_{0}^{1}\!+\!{2\mu}\langle x_{1}(t),x_{2}(t)\rangle-2\mu h_0\dot{y}(t)+{u(t)} \label{eq:InpOutlin2}
\end{align}
\end{subequations}
with input $u$, state $x$ and output $y$.

\tb{We like to note that system~\eqref{eq:InpOut} is basically a hyperbolic PDE coupled with an ODE (when~\eqref{eq:InpOutlin2} is rewritten as a system of first order equations). Therefore, it might be amenable to stabilization by backstepping methods, which have been successfully used for such systems in the recent past, see e.g.~\cite{DeutGabr21,DiMeBrib18,WangKrst18}. However, we like to emphasize that~\eqref{eq:InpOutlin2} is nonlinear, which is out of the scope of these works, and the funnel control techniques studied in the present work (which do not aim at stabilization) can be directly applied to the present form, which is more natural from a modelling point of view. Still the question how funnel control is related to this broad scope of existing results is not entirely clear and remains interesting.}

%%%%%%%%%%%%%%%%%%%%%%%%%%%%%%%%%%%%%%%%%%%%%%%%%%%%%%%%%%%%%%%%%%%%%%%%%%%%%%%%%%%%%%%%%%%%%%%%%%%%%%%%%%%%%
\subsection{Control objective --- funnel control}\label{Ssec:ContrObj}
%%%%%%%%%%%%%%%%%%%%%%%%%%%%%%%%%%%%%%%%%%%%%%%%%%%%%%%%%%%%%%%%%%%%%%%%%%%%%%%%%%%%%%%%%%%%%%%%%%%%%%%%%%%%%

Our goal is to design an output error feedback of the form
\begin{equation} \label{eq:F}
u(t) = F\big(t, e(t), \dot e(t)\big),
\end{equation}
where $e(t) = y(t) - y_{\rm ref}(t)$ is the tracking error and $y_{\rm ref}\in W^{2,\infty}(\R_{\ge 0};\R)$ is a given reference position, which applied to~\eqref{eq:InpOut} results in a closed-loop system that satisfies:
\begin{itemize}
\item the pair $(t,e(t))$ evolves within the prescribed set
    \begin{equation*}
    \mathcal{F}_{\varphi} := \setdef{(t,e)\in\R_{\ge 0} \times\R}{\varphi(t) |e| < 1},\label{eq:perf_funnel}
    \end{equation*}
    which is determined by a function~$\varphi$ belonging to
    \[
     \Phi \!:=\!
    \setdef{
    \varphi\in  \cC^1(\R_{\ge 0};\R)
    }{\!\!\!
    \begin{array}{l}
    \text{ $\varphi, \dot \varphi$ are bounded,}\\
    \text{ $\varphi (\tau)>0$ for all $\tau>0$,}\\
     \text{ } \liminf_{\tau\rightarrow \infty} \varphi(\tau) > 0
    \end{array}
    \!\!\!}
    \]
    and
\item  the signals $u, e, \dot e$ are uniformly bounded on $\R_{\ge0}$.
\end{itemize}

The set $\mathcal{F}_{\varphi}$ is called the \textit{performance funnel}.
Its boundary, the \textit{funnel boundary}, is given by the reciprocal of $\varphi$, see Fig.~\ref{Fig:funnel}. The case $\varphi(0)=0$ is explicitly allowed and puts no restriction on the initial value since $\varphi(0) |e(0)| < 1$; in this case the funnel boundary $1/\varphi$ has a pole at $t=0$.

 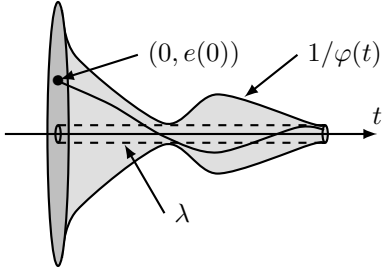
\begin{figure}[h]
  \begin{center}
\begin{tikzpicture}[scale=0.35]
\tikzset{>=latex}
  %\draw[thick,->] (0,-6)--(0,6);
  \filldraw[color=gray!25] plot[smooth] coordinates {(0.15,4.7)(0.7,2.9)(4,0.4)(6,1.5)(9.5,0.4)(10,0.333)(10.01,0.331)(10.041,0.3) (10.041,-0.3)(10.01,-0.331)(10,-0.333)(9.5,-0.4)(6,-1.5)(4,-0.4)(0.7,-2.9)(0.15,-4.7)};
  \draw[thick] plot[smooth] coordinates {(0.15,4.7)(0.7,2.9)(4,0.4)(6,1.5)(9.5,0.4)(10,0.333)(10.01,0.331)(10.041,0.3)};
  \draw[thick] plot[smooth] coordinates {(10.041,-0.3)(10.01,-0.331)(10,-0.333)(9.5,-0.4)(6,-1.5)(4,-0.4)(0.7,-2.9)(0.15,-4.7)};
  \draw[thick,fill=lightgray] (0,0) ellipse (0.4 and 5);
  \draw[thick] (0,0) ellipse (0.1 and 0.333);
  \draw[thick,fill=gray!25] (10.041,0) ellipse (0.1 and 0.333);
  \draw[thick] plot[smooth] coordinates {(0,2)(2,1.1)(4,-0.1)(6,-0.7)(9,0.25)(10,0.15)};
  \draw[thick,->] (-2,0)--(12,0) node[right,above]{\normalsize$t$};
  \draw[thick,dashed](0,0.333)--(10,0.333);
  \draw[thick,dashed](0,-0.333)--(10,-0.333);
  \node [black] at (0,2) {\textbullet};
  \draw[->,thick](4,-3)node[right]{\normalsize$\lambda$}--(2.5,-0.4);
  \draw[->,thick](3,3)node[right]{\normalsize$(0,e(0))$}--(0.07,2.07);
  \draw[->,thick](9,3)node[right]{\normalsize$1/\varphi(t)$}--(7,1.4);
\end{tikzpicture}
\end{center}
 \vspace*{-2mm}
 \caption{Error evolution in a funnel $\mathcal F_{\varphi}$ with boundary $1/\varphi(t)$.}
 \label{Fig:funnel}
 \end{figure}
On the other hand, note that boundedness of $\varphi$ implies that there exists $\lambda>0$ such that $1/\varphi(t)\geq\lambda$ for all $t > 0$. This implies that signals evolving in $\mathcal{F}_{\varphi}$ are not forced to converge to $0$ asymptotically. Furthermore, the funnel boundary is not necessarily monotonically decreasing
and there are situations, like in the presence of periodic disturbances, where widening the funnel over some later time interval might be beneficial.
It was shown in~\cite{BergLe18a} that for $\varphi_0,\varphi_1\in\Phi$, the following choice for $F$ in \eqref{eq:F}
\begin{equation}\label{eq:fun-con}
\begin{aligned}
F(t,e(t),\dot{e}(t)) &= - k_1(t) \big( \dot e(t) + k_0(t) e(t)\big),\\
k_0(t) &= \frac{1}{1-\varphi_0(t)^2\|e(t)\|^2},\\
k_1(t) &= \frac{1}{1-\varphi_1(t)^2\|\dot e(t) + k_0(t) e(t)\|^2},
\end{aligned}
\end{equation}
achieves the above control objective for a large class of nonlinear systems with relative degree two. In the present paper we extend this result and show feasibility of~\eqref{eq:fun-con} for the model described by~\eqref{eq:InpOut}. We highlight that the functions $\varphi_0,\varphi_1$ are design parameters in the control law~\eqref{eq:fun-con}.
 Typically, the specific application dictates the constraints on the tracking error and thus indicates suitable choices.

In~\cite{BergPuch20a} --- extending the findings from~\cite{BergLe18a} --- it was shown that the controller~\eqref{eq:fun-con} is feasible for nonlinear systems of the form
\begin{equation}\label{eq:nonlSys}\tag{Sys}
\begin{aligned}
\ddot y(t)&=\mathcal{S}(y,\dot{y})(t) + \gamma\, u(t)\\
\big(y(0),\dot y(0)\big) &= \big(y^0,y^1\big) \in \R^2,
\end{aligned}
\end{equation}
where, under structural assumptions, the operator $\mathcal{S}$ may in particular incorporate input-output dynamics from an infinite-dimensional well-posed linear system.
We note that corresponding results hold for systems with relative degree other than two, but this special case is sufficient for the present article. General sufficient conditions on the operator $\mathcal{S}$ guaranteeing feasibility of the controller~\eqref{eq:fun-con} were given in~\cite{BergLe18a,HackHopf13,IlchSeli16} and~\cite{IlchRyan02b} before, while suitable adaptions allowing for truly infinite-dimensional internal dynamics were finally explored in~\cite{BergPuch20a}. For details on the structural assumptions on the systems class and the operator $\mathcal{S}$ and the relation to prior results we refer to~\cite{BergPuch20a}.

%%%%%%%%%%%%%%%%%%%%%%%%%%%%%%%%%%%%%%%%%%%%%%%%%%%%%%%%%%%%%%%%%%%%%%%%%%%%%%%%%%%%%%%%%%%%%%%%%%%%%%%%%%%%%
\subsection{Organization of the present paper}\label{Ssec:Orga}
%%%%%%%%%%%%%%%%%%%%%%%%%%%%%%%%%%%%%%%%%%%%%%%%%%%%%%%%%%%%%%%%%%%%%%%%%%%%%%%%%%%%%%%%%%%%%%%%%%%%%%%%%%%%%

In Section~\ref{Sec:FunCon} we formulate the main result of this article, stating  that the funnel control objective for the model of the moving water tank is achieved in the sense of Section~\ref{Ssec:ContrObj}.
For this, it suffices to verify the conditions identified in~\cite{BergPuch20a},
which is done in Section~\ref{Sec:LinMod} by considering the model
in the framework of well-posed linear systems. \tb{Possible extensions of the results to the case of steady states corresponding to non-zero control values and invoking space-dependent friction terms are discussed in Section~\ref{Sec:Ext}.} The application of the controller to the moving water tank system is illustrated by a simulation in Section~\ref{Sec:Sim}.

\section{Main result}\label{Sec:FunCon}
%%%%%%%%%%%%%%%%%%%%%%%%%%%%%%%%%%%%%%%%%%%%%%%%%%%%%%%%%%%%%%%%%%%%%%%%%%%%%%%%%%%%%%%%%%%%%%%%%%%%%%%%%%%%%
%

In this section we formulate how the funnel controller~\eqref{eq:fun-con} described in Subsection~\ref{Ssec:ContrObj}  achieves the control objective for system~\eqref{eq:InpOut}  --- this is the main result of the article. %In the remaining part we present a proof which is split into several parts.
The initial conditions for~\eqref{eq:InpOut} are
\begin{equation}\label{eq:IClin}
\begin{aligned}
    x(0) = x_0 \in X, \quad
     \big(y(0),\dot y(0)\big) = \big(y^0,y^1\big)\in\R^2.
\end{aligned}
\end{equation}

We call $(x,y):[0,\omega)\rightarrow {X}\times\R$ a strong solution of \eqref{eq:InpOut}--\eqref{eq:IClin} on an interval $[0,\omega)$, if \footnote{For the definition of $X_{-1}$ see Sec.~\ref{Sec:LinMod}.}
\begin{itemize}
\item $y\in W_{\loc}^{2,1}([0,\omega);\R)$ and $x\in \cC([0,\omega);X)$,
\item the initial conditions \eqref{eq:IClin} hold,
\item  $x\in W_{\mathrm{loc}}^{1,1}([0,\omega);X_{-1})$
and~\eqref{eq:InpOutlin} holds for a.e.\ $t\in[0,\omega)$ as equation in $X_{-1}$,
\item $y$ satisfies \eqref{eq:InpOutlin2} for a.a.\ $t\in[0,\omega)$. %\footnote{In other words, $y$ solves \eqref{eq:InpOutlin2}  in the sense of Carath\'eodory.}.
\end{itemize}
In other words, $x$ is a strong solution of~\eqref{eq:InpOutlin} and $y$ is a Carath\'eodory solution of~\eqref{eq:InpOutlin2}.
A solution $(x,y)$ is called {\em classical}, if $x\in \cC^1([0,\omega);X)$ and $y\in \cC^2([0,\omega);\R^2)$; it is called {\em global}, if it can be extended to $\R_{\ge 0}$.

\begin{Thm}\label{Thm:funcon-PDE}
   Let $y_{\rm ref}\in W^{2,\infty}(\R_{\ge 0};\R)$, $\varphi_0,\varphi_{1}\in\Phi$ and $(y^{0},y^{1})\in \R^{2}$, $x_0\in X$ and $v_0\in\R$
 be such that
\begin{equation}\label{eq:initialerror}\begin{array}{ll}
&x_0 + b v_0\in \cD(A),\\
&\varphi_0(0) |y^0-y_{\rm ref}(0)|<1, \quad\text{and }\\
&\varphi_1(0) | y^1-\dot y_{\rm ref}(0) + k_0(0) \big(y^0-y_{\rm ref}(0)\big) | < 1.
\end{array}
\end{equation}
Then the closed-loop system~\eqref{eq:InpOut}--\eqref{eq:IClin}
has a unique global strong solution $(x,y):\R_{\ge0}\rightarrow {X}\times\R$. Moreover, the following properties hold.
\begin{enumerate}
\item \label{thmi1}The functions $k_{0},k_{1},u$ and $x,y,\dot{y}$ are bounded.
\item  \label{thmi2}The error~$e = y-y_{\rm ref}$ is uniformly bounded away from the funnel boundary in the following sense:
\begin{equation}\label{eq:bound-error}
 \exists\, \varepsilon>0\  \forall\, t>0:\ |e(t)|\le \varphi_0(t)^{-1} - \varepsilon.
\end{equation}
\item \label{thmi3} If $y^1=v_0$, then the solution is a classical solution.
\end{enumerate}
\end{Thm}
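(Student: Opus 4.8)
The plan is to derive Theorem~\ref{Thm:funcon-PDE} from the abstract feasibility result of~\cite{BergPuch20a} by writing the closed loop~\eqref{eq:InpOut} in the form~\eqref{eq:nonlSys}. Comparing~\eqref{eq:InpOutlin2} with~\eqref{eq:nonlSys}, I would read off $\gamma=1/m>0$ and set
\[
\mathcal{S}(y,\dot y)(t)=\tfrac1m\Big(\tfrac g2\big(x_1(t,1)^2-x_1(t,0)^2\big)+2\mu\langle x_1(t),x_2(t)\rangle-2\mu h_0\dot y(t)\Big),
\]
where $x=(x_1,x_2)$ solves the linearized Saint-Venant equation~\eqref{eq:InpOutlin} driven by $\dot y$. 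Thus $\mathcal{S}$ is the composition of the \emph{internal dynamics} $\dot y\mapsto x$ with a quadratic readout, and it is manifestly causal. Since~\eqref{eq:fun-con} is precisely the relative-degree-two funnel controller analysed in~\cite{BergPuch20a} and $\gamma>0$ provides the requisite sign of the high-frequency gain, all assertions of the theorem follow once the structural hypotheses of~\cite{BergPuch20a} on $\mathcal{S}$ --- chiefly bounded-input, bounded-output (BIBO) stability of the internal dynamics --- are verified.

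The bulk of the work is to realize the internal dynamics as a well-posed linear system. First I would confirm that $A$ with domain~\eqref{eq:domA} generates a contraction $C_0$-semigroup on $X$: with the energy $E=\tfrac12\int_0^1(g z_1^2+h_0 z_2^2)\ds{\zeta}$ one computes $\dot E=-g h_0[z_1 z_2]_0^1-2\mu h_0\|z_2\|^2$, and the boundary term vanishes by $z_2(0)=z_2(1)=0$, leaving $\dot E=-2\mu h_0\|z_2\|^2\le 0$. Because $b=(0,-1)^\top\notin\cD(A)$, the control operator $B=Ab$ genuinely lives in the extrapolation space $X_{-1}$ (its first component is a boundary distribution), so I must verify that $B$ is an admissible control operator and that the trace functionals $z\mapsto z_1(0),\,z_1(1)$ defining the quadratic output are admissible observation operators; the term $\langle x_1,x_2\rangle$ is bounded on $X$ and hence harmless. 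Admissibility together with well-posedness makes the input-to-state and input-to-output maps bounded on finite horizons.

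The main obstacle is upgrading this to a \emph{uniform} BIBO bound, which needs exponential stability --- delicate because the damping $2\mu$ acts only on $z_2$ and, worse, $A$ is \emph{not} exponentially stable on all of $X$: the states $(c,0)$ lie in $\ker A$, mirroring the conserved mass $\tfrac{\dif}{\dif t}\int_0^1 x_1\ds{\zeta}=-h_0[x_2]_0^1=0$. I would circumvent this by splitting $X=\ker A\oplus X_s$ with the invariant subspace $X_s=\setdef{z\in X}{\int_0^1 z_1\ds{\zeta}=0}$, on which I would prove exponential stability via a Gearhart--Prüss type resolvent estimate, using the coupling of the two transport equations to transfer the $z_2$-damping to $z_1$. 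The key observation is that the input never excites $\ker A$: since $x_2(t,0)=x_2(t,1)=\dot y(t)$, one has $\tfrac{\dif}{\dif t}\int_0^1 x_1\ds{\zeta}=0$ for every input, so the conserved component stays fixed at $h_0$ and contributes only a bounded constant to the output. Combining exponential stability on $X_s$ with admissibility of $B$ and of the traces then yields $\dot y\in L^\infty\Rightarrow x\in L^\infty(\R_{\ge 0};X)$ and $x_1(\cdot,0),x_1(\cdot,1)\in L^\infty$, whence $\mathcal{S}(y,\dot y)\in L^\infty$ whenever $y,\dot y\in L^\infty$ --- exactly the BIBO stability required by~\cite{BergPuch20a}, which delivers the global strong solution and the bounds in~(i)--(ii).

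For the regularity statement~(iii) I would invoke the compatibility condition $x_0+bv_0\in\cD(A)$ from~\eqref{eq:initialerror}: when $y^1=v_0$ the quantity $x(t)+b\dot y(t)$ starts in $\cD(A)$, so standard semigroup regularity for~\eqref{eq:InpOutlin} gives $x\in\cC^1(\R_{\ge 0};X)$; the right-hand side of~\eqref{eq:InpOutlin2} is then continuous, so $y\in\cC^2$ and the strong solution is classical.
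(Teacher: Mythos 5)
Your overall strategy (recast \eqref{eq:InpOut} as \eqref{eq:nonlSys} and verify the hypotheses of \cite{BergPuch20a}, with BIBO stability of the internal dynamics as the main task) matches the paper, and your treatment of generation, admissibility, state boundedness and assertion (iii) is sound. The genuine gap is the step ``combining exponential stability on $X_s$ with admissibility of $B$ and of the traces then yields $x_1(\cdot,0),x_1(\cdot,1)\in L^\infty$.'' This implication does not hold in general: the trace functionals are \emph{unbounded} observation operators on $X=L^2([0,1];\R^2)$, so $x\in L^\infty(\R_{\ge0};X)$ gives no pointwise control of $C_\Lambda x(t)$, and admissibility plus exponential stability only yields $L^2$-type (locally square-integrable, exponentially weighted) bounds on the output. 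An $L^\infty$-in, $L^\infty$-out bound for the input--output map is equivalent to its impulse response being a measure of bounded total variation, which is strictly stronger than the transfer function being bounded on a shifted half-plane. This is precisely where the paper invests its technical effort: Proposition~\ref{prop:sec3} computes $\mathbf{H}$ explicitly, and Lemma~\ref{lem:measure} shows by a careful Fourier-series analysis that $\mathcal{L}^{-1}(\mathbf{H})=\mathfrak{h}_{L^1}+\frac{1}{4c}\mathfrak{h}_\delta$ decomposes into an $L^1$ part and a singular part consisting of an infinite train of Dirac measures at $t=k/c$, summable only thanks to the weights $e^{-k\mu/c}$. Your Gearhart--Pr\"uss route establishes a true and relevant fact (the spectrum indeed sits on $\RE\lambda=-\mu$ away from $\ker A$), but it stops short of the actual BIBO estimate; without an argument in the spirit of Lemma~\ref{lem:measure} (or Proposition~\ref{prop:BIBO}) the chain of implications breaks exactly at the point evaluations entering $\mathcal{S}$.

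A second, smaller omission: the theorem asserts a \emph{unique} global strong solution, and \cite{BergPuch20a} only delivers existence here. The paper supplies a separate uniqueness argument (Step~4 of its proof), combining the local Lipschitz estimate \eqref{eq:Lipschitz} for $\widetilde{\mathcal{T}}$, the finite-time gain bound from Proposition~\ref{prop:BIBO}, and a contraction-on-a-short-interval contradiction at the first time two solutions separate. Your proposal does not address uniqueness at all, so this part of the statement remains unproved.
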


\begin{proof}
{\it Step 1:}
We rewrite \eqref{eq:InpOut},~\eqref{eq:IClin} in the form of equation~\eqref{eq:nonlSys}, obtaining
\begin{align}%
\ddot{y}(t)&=\mathcal{T}(\dot y)(t)+\frac{u(t)}{m},\label{eq:Teq1}
\end{align}
where the mapping $\mathcal{T}$ is formally given by
\begin{align*}
\mathcal{T}(\eta)(t)={}&\frac{g}{2m}  x_1(t,\cdot)^{2}|_{0}^{1}  + \frac{2\mu}{m} \big( \langle x_{1}(t),x_{2}(t)\rangle - h_0\eta(t)\big)\notag \\
={}&F\big(\widetilde{\mathcal{T}}(\eta)(t),Cx(t)\big)
\end{align*}
with $x$ being the strong solution of
\begin{equation}
  \dot{x}(t)= A\big(x(t)+ b\eta(t)\big), \quad x(0)=x_0,\label{eq:Teq0}
\end{equation}
where $A,b$ are defined in \eqref{eq:SVlin}--\eqref{eq:domA} and we use the notation
\begin{equation}\label{eq:opFTC}
\begin{aligned}
	&F:\R\times\R^{2}\to\R,(\alpha,\beta)\mapsto \frac{g}{2m}\beta_{1}\beta_{2}+\frac{2\mu}{m}\alpha,\\
	&\widetilde{\mathcal{T}}:\cC(\R_{\ge0};\R)\to L_{\mathrm{loc}}^{\infty},\ \eta\mapsto\langle x_{1},x_{2}\rangle -h_{0}\eta,\\
	&C:\cD(C)\subset X\to \R^{2}, x\mapsto (C_{1}x,C_{2}x)^\top,\\
	&C_{i}x:=(x_{1}(1)+(-1)^{i}x_{1}(0)), \quad i=\{1,2\}.
\end{aligned}
\end{equation}
Since the operator $C$ acts as point evaluation of functions in space, the domain $\cD(C)\subset X$ has to be chosen suitably, see~\eqref{eq:domC}.
Also note that $\mathcal{T}$ depends on $x=x(t,\zeta)$ which in turn is given through $\eta$ and $x_{0}$ as the solution of~\eqref{eq:Teq0}, the existence of which is an outcome of Step~3 below.%., which resembles a one-dimensional damped wave equation. %We like to point out that the operator~$\cS$ essentially models the internal dynamics of system~\eqref{eq:InpOutlin}.
\smallskip

{\it Step 2:} We show that $\mathcal{T}$ is well-defined from $\cC(\R_{\ge0};\R)$ to $L^\infty_{\loc}(\R_{\ge 0};\R)$ and, in particular, that the mapping
\begin{align*}
\mathcal{F}:\cC(\R_{\ge0};\R)\to L_{\mathrm{loc}}^{\infty}(\R_{\ge0};X\times \R^{2}),\ \eta\mapsto \begin{smallbmatrix}I\\C\end{smallbmatrix}x,
\end{align*}
associated with the PDE~\eqref{eq:Teq0}  is  well-defined. Moreover we show that
\begin{equation}
\label{eq:estimate:xCx}
\max\{\|x\|_{\infty},\|Cx\|_{\infty}\}\lesssim c_{x_{0}}+\|\eta\|_{\infty}
\end{equation}
 for all $\eta\in \cC(\R_{\ge0};\R)\cap L^{\infty}(\R_{\ge0};\R)$, where $c_{x_{0}}=0$ if $x_{0}=0$.
This step is performed in Proposition \ref{prop:BIBO} by showing that the triple $(A,Ab,\begin{smallbmatrix}I\\C\end{smallbmatrix})$ defines a well-posed bounded-input, bounded-output stable linear system.
\smallskip

{\it Step 3:} Note that $\widetilde{\mathcal{T}}$ is uniformly Lipschitz on bounded sets, i.e., for any $R>0$ there exists $L=L_{x_{0}}>0$ such that
\begin{equation}\label{eq:Lipschitz}
\|\widetilde{\mathcal{T}}(\eta)-\widetilde{\mathcal{T}}(\hat{\eta})\|_{L^{\infty}([0,t];\R)}\leq L\|\eta-\hat{\eta}\|_{L^{\infty}([0,t];\R)}
\end{equation}
for all $t>0$ and $\eta,\hat{\eta}\in \setdef{\eta\in\cC([0,t];\R)}{ \|\eta\|_{\infty}\leq R}$. This follows easily from \eqref{eq:estimate:xCx} and the fact that an inner product restricted to bounded subsets is uniformly Lipschitz. Furthermore, it is clear that $\widetilde{\mathcal{T}}$ is causal, and, by \eqref{eq:estimate:xCx}, that for all $\eta\in \cC(\R_{\ge0};\R)\cap L^{\infty}(\R_{\ge0};\R)$ we have
\[
    \|\widetilde{\mathcal{T}}(\eta)\|_{\infty}\lesssim \tilde{c}_{x_{0}}+\|\eta\|_{\infty}^{2}+\|\eta\|_{\infty}.
\]
Since $F\in \cC^{1}(\R\times\R^{2};\R)$, we conclude from the above that $\mathcal{T}$ satisfies all the Properties (P1)--(P4) from \cite[Def.~3.1]{BergPuch20a} and thus~\cite[Thms.~2.1~\&~3.3]{BergPuch20a} imply the existence of a global strong solution $(x,y)$ and, together with~\eqref{eq:estimate:xCx}, that Assertions~\ref{thmi1}--\ref{thmi2} hold. Note that mild solutions of~\eqref{eq:InpOutlin} as considered in~\cite{BergPuch20a} are strong solutions, \cite[Thm.~3.8.2]{Staf05}.
\smallskip

{\it Step 4:} We show uniqueness of the solution. First recall that, as a consequence of Proposition~\ref{prop:sec3}, the unique strong solution of~\eqref{eq:Teq0} is given by
  \begin{equation}\label{eq:strongsol}
  x(t)=T(t)x_{0}+\int_{0}^{t} T_{-1}(t-s)B\eta(s)\ds{s},\quad t\ge 0,
  \end{equation}
where $B=A_{-1} b$. Thus, invoking $\eta = \dot y$, it suffices to show uniqueness of the solution~$y$ of~\eqref{eq:Teq1}. Assume that $\bar y\in W^{2,1}_{\loc}(\R_{\ge 0};\R)$ is another solution of~\eqref{eq:Teq1} with $\bar y(0) = y^0$ and $\dot{\bar y}(0) = y^1$, and let $\bar x$ be the strong solution of~\eqref{eq:InpOutlin} with $y=\bar y$ and $\bar x(0) = x_0$. Define $t_0 := \inf\setdef{t\ge 0}{ \dot y(t) \neq \dot{\bar y}(t)}$ and assume, seeking a contradiction, $t_0<\infty$. Clearly, $x(t) = \bar x(t)$ for all $t\in[0,t_0]$, hence, by Proposition~\ref{prop:BIBO}, there exists $c_0>0$ such that for all $t\in[t_0,t_0+1]$ we have $\|Cx(t)-C\bar x(t)\|_{\R^2} \le c_0 \sup_{s\in [t_0,t_0+t]} |\dot y(s) - \dot{\bar y}(s)|$. Furthermore, for $R:= \max\{\|\dot y\|_\infty, \|\dot{\bar y}\|_\infty\}$ we obtain a constant $L>0$ such that~\eqref{eq:Lipschitz} holds. The preparations are completed by observing that, invoking Step~3 and~\eqref{eq:estimate:xCx}, there exist compact subsets $K_1\subseteq\R$ and $K_2 \subseteq\R^2$ such that $\widetilde{\mathcal{T}}(\dot y)(s), \widetilde{\mathcal{T}}(\dot{\bar y})(s)\in K_1$ and $(Cx)(s), (C\bar x)(s)\in K_2$ for all $s\in [t_0,t_0+1]$. Since $F$ from~\eqref{eq:opFTC} is in $\cC^{1}(\R\times\R^{2};\R)$,
%The function~$F$ from~\eqref{eq:opFTC} is clearly locally Lipschitz, i.e.,
 there exists $c_1>0$ such that
\[
    |F(\alpha_1,\beta_1) - F(\alpha_2,\beta_2)| \le c_1 (|\alpha_1-\alpha_2| + \|\beta_1-\beta_2\|_{\R^2})
\]
for all $\alpha_1,\alpha_2\in K_1$ and $\beta_1,\beta_2\in K_2$. Now choose $\sigma\in (0,1)$ such that $\sigma c_1 (L+c_0) < 1$. By definition of $t_0$,  $\dot{y}(t_{0})=\dot{\bar{y}}(t_{0})$ and there exists $t\in [t_0, t_0+\sigma]$ such that
\[
   |\dot y(t) - \dot{\bar y}(t)| = \sup_{s\in [t_0,t_0+\sigma]} |\dot y(s) - \dot{\bar y}(s)| =: \eps > 0.
\]
Hence, integrating~\eqref{eq:Teq1} yields the contradiction
\begin{align*}
    \eps &= |\dot y(t) - \dot{\bar y}(t)|\\
  &\le \int_{t_0}^t \big|F\big(\widetilde{\mathcal{T}}(\dot y)(s),Cx(s)\big) - F\big(\widetilde{\mathcal{T}}(\dot{\bar y})(s),C\bar x(s)\big)\big| {\rm d}s \\
  &\le \sigma c_1 (L+c_0) \sup_{s\in [t_0,t_0+t]} |\dot y(s) - \dot{\bar y}(s)| < \eps.
\end{align*}
Thus, $\dot y = \dot{\bar y}$ and by $y(0) = y^0 = \bar y(0)$, it follows that $y = \bar y$.
\smallskip

{\it Step 5:} We show~\ref{thmi3}. Note that $\dot{y}\in W_{\mathrm{loc}}^{1,2}(\R_{\ge0};\R)$ by Step~3 and $\dot y(0) = v_0$ by assumption. Invoking well-posedness together with $x(0) + b \dot y(0)\in\cD(A)$ and~\cite[Prop.~4.6]{TucsWeis14}, the solution $x$ of~\eqref{eq:Teq0} is indeed a classical solution and $\mathcal{F}(\eta)\in \cC(\R_{\ge0};X\times\R^{2})$. This implies that $\mathcal{T}(\dot{y})(t)$ is continuous, whence $\ddot{y}$ is continuous by~\eqref{eq:Teq1}, which proves the claim.
\end{proof}

\vspace{-5mm}

\tb{
\begin{Rem}
We like to emphasize that the funnel controller~\eqref{eq:fun-con} does not require any knowledge of system parameters or initial values. Therefore, it is robust with respect to (arbitrary) uncertainties in these parameters. More precisely, for any fixed controller parameters $\varphi_0,\varphi_{1}\in\Phi$ the controller~\eqref{eq:fun-con} is feasible in the sense of Theorem~\ref{Thm:funcon-PDE} for any system parameters $\mu>0,\ m>0,\ h_0, g\in\R$, any reference signal $y_{\rm ref}\in W^{2,\infty}(\R_{\ge 0};\R)$ and any initial values $(y^{0},y^{1})\in \R^{2}$, $x_0\in X$ which satisfy~\eqref{eq:initialerror}. In particular, for any such parameters the controller achieves the prescribed performance of the tracking error as in~\eqref{eq:bound-error}, without the need to modify or tune the controller. Via the gain functions~$k_0$ and~$k_1$ in~\eqref{eq:fun-con} the controller is able to adapt its behavior to the specific situation.
\end{Rem}
}

\vspace{-3mm}

%
%%%%%%%%%%%%%%%%%%%%%%%%%%%%%%%%%%%%%%%%%%%%%%%%%%%%%%%%%%%%%%%%%%%%%%%%%%%%%%%%%%%%%%%%%%%%%%%%%%%%%%%%%%%%%
\section{Linearized model -- abstract framework}\label{Sec:LinMod}
%%%%%%%%%%%%%%%%%%%%%%%%%%%%%%%%%%%%%%%%%%%%%%%%%%%%%%%%%%%%%%%%%%%%%%%%%%%%%%%%%%%%%%%%%%%%%%%%%%%%%%%%%%%%%
%

In this section we collect and derive the results required for Step 2 in the proof of Theorem~\ref{Thm:funcon-PDE} by using the framework of well-posed linear systems and showing bounded-input, bounded-output stability of the considered systems. %The latter is crucial for the approach considered here.
%The crucial point is to show that  impulse response of the corresponding transfer functions defines measures with bounded total variation.
%Furthermore, for later use we consider admissibility with respect to a certain control operator and compute the transfer functions with respect to certain observation operators. Finally, we show that the inverse Laplace transform of these transfer functions defines measures with bounded total variation.
%\tb{TB: in diesem Abschnitt $X\subseteq L^2([0,1];\C^2)$ betrachten und später in Sec.~III zeigen, dass alle betrachteten Lösungen reell sind?}

%FS:
%We assume the port-Hamiltonian structure, that is, we consider $X$ with
%The equation~\eqref{eq:SVlin} motivates to consider energy-based norms given through the Hamiltonian $\cH$, i.e., for $x_{1},x_{2}\in X$ let
%$$\langle x_1, x_2\rangle_X=\dfrac{1}{2}\int_0^1{\overline{x_1(\zeta)}\, \cH\, x_2(\zeta)\, \dif \zeta}.$$

%\begin{Prop}
%Under the former assumptions the operator $A$ is m-dissipative.
%\label{prop:A_m_diss}
%\end{Prop}
%Clearly, the solution of the linear, one-dimensional damped wave equation $\dot{z}=A_{\mu}z$ with $z(0)=z_{0}$ can be derived by a Fourier ansatz. More general, the solution theory for linear PDEs and the corresponding systems can be discussed using strongly continuous semigroups, see e.g.~\cite{JacoZwar12,Staf05,TucsWeis09}.
\tb{Let us recall a few basics from semigroup theory and admissible operators in the context of linear systems, which can all be found e.g.\ in \cite{TucsWeis09}. A {\it semigroup}  $({T}(t))_{t\ge0}$ on $X$ is a $\mathcal{B}(X;X)$-valued map satisfying ${T}(0)=I_{X}$ and ${T}(t+s)={T}(t){T}(s)$, $s,t\geq0$, where $I_{X}$ denotes the identity operator. Furthermore, we assume that semigroups are strongly continuous, i.e., $t\mapsto {T}(t)x$ is continuous for every $x\in X$. Semigroups are characterized by their generator~$\cA$, which is a possibly unbounded operator on~$X$. The growth bound of the semigroup is the infimum over all $\omega\in \R$ such that $\sup_{t\ge0}\|\mathrm{e}^{-t\omega}T(t)\|<\infty$.}
%In the following, $({T}(t))_{t\ge0}$ denotes a strongly continuous semigroup on a Banach space~$X$ with infinitesimal generator~${A}$ and growth bound $\omega_{{A}}$. %Recall that $({T}(t))_{t\ge0}$ is  exponentially stable, if $\omega_{A}<0$.
%For any semigroup there exist constants $M\geq1$ and $\omega\in\R$ such that $\|\mathcal{T}(t)\|\leq M\ee^{\omega t}$ for all $t \ge0$. The infimum over all $\omega$ such that this inequality is valid for some $M$ is called the growth bound $\omega_{\mathcal{A}}$ and $\mathcal{T}$ is  {\it exponentially stable}, if $\omega_{\mathcal{A}}<0$. If $\|\mathcal{T}(t)\|\leq 1$ for all $t\geq 0$, then $\mathcal{T}$ is called  {\it contraction semigroup}.
%%We say that $(T(t))_{t\ge0}$ is a {\it contraction semigroup}, if $\|T(t)\|\leq 1$ for all $t\geq 0$. Furthermore, if there exist constants \tb{$M>0$ and $\omega < 0$} such that $\|T(t)\|\leq M\ee^{\omega t}$ for all $t\geq0$, then we call the semigroup {\it exponentially stable}; \tb{the infimal $\omega$ for which this relation holds, even if $(T(t))_{t\ge0}$ is not exponentially stable, is denoted by $\omega_\cA$, referring to the generator~$\cA$ of the semigroup.}
%%\tb{TB: briefly recall concept of (contraction) semigroup?}
%%In order to consider the notion of {\it admissibility} we first recall the space $X_{-1}$, see e.g.~\cite[Sec.~2.10]{TucsWeis09}, which should be thought of as abstract Sobolev space with negative index.
%% For $\cX$ a {complex} Hilbert space, a semigroup $(\cT(t))_{t\ge 0}$ with generator $\cA:\cD(\cA)\subseteq \cX\to\cX$
% %with non-empty resovlent $\rho(\cA)$
 For $\beta\in\C$ in the resolvent set $\rho(A)$ of the generator $A$, we denote by $X_{-1}$ the completion of~$X$ with respect to the norm $\|\cdot\|_{X_{-1}}=\|(\beta I-A)^{-1}\cdot \|_X$. Recall that $X_{-1}$ is independent of the choice of~$\beta$ and that $(\beta I-{A})$ uniquely extends to a surjective isometry $(\beta I - A_{-1})\in \cB(X;X_{-1})$.
%Then the norms generated as above for different $\beta\in\rho(\cA)$ are equivalent and, in particular,
%If $\cA$ generates a semigroup $(\cT(t))_{t\ge 0}$ in $\cX$, then the latter has a unique extension to a semigroup $(\cT_{-1}(t))_{t\ge 0}$ in~$\cX_{-1}$, which is generated by $\cA_{-1}$.
The  semigroup $(T(t))_{t\ge 0}$ has a unique extension to a semigroup $(T_{-1}(t))_{t\ge 0}$ in~$X_{-1}$, which is generated by $A_{-1}$. Furthermore, let $X_1$ be the space $\cD(A)$ equipped with the graph norm of $A$.
%which is given by
%$
  %  \cT_{-1}(t) = (\beta I - \cA_{-1}) \cT(t),\quad t\ge 0,
%$
%where $(\beta I - \cA_{-1}) \in \cB(\cX;\cX_{-1})$ is a surjective isometry.
%\footnote{$\cB(\cX;\cY)$, where $\cX, \cY$ are Hilbert spaces, denotes the set of all bounded linear operators $\cA:\cX\to\cY$. An operator $\cA\in\cB(\cX;\cY)$ is called unitary, if $\cA^*\cA = I$ and $\ran \cA = \cY$.}.
%Therefore, $\cA_{-1}$ is the generator of the semigroup $(\cT_{-1}(t))_{t\ge 0}$.
For Hilbert spaces $U,Y$, a triple $(A,B,C)\in \cB(X_{1};X)\times \cB(U;X_{-1})\times \cB(X_{1};Y)$ is called a \emph{regular well-posed} system, if for some (hence all) $t>0$,%the following conditions hold
\begin{enumerate}[label=(\alph*)]
\item\label{prop1} $A$ is the generator of a semigroup $({T}(s))_{s\ge0}$ on $X$;
\item\label{prop2} $\int_{0}^{t}T_{-1}(t-s)Bu(s)\ds{s}\in X$ for all $u\in L^{2}([0,t];U)$;
\item\label{prop3} $(\cD(A),\|\cdot\|_X)\!\to\!L^{2}([0,t];Y),\, x\!\mapsto\! CT(\cdot)x$ is bounded;
\item\label{prop4} there exists a bounded function $\mathbf{G}:\C_{\omega}\to \cB({U};Y)$, $\omega>\omega_{{A}}$, such that for all $r,s\in\C_{\omega}$,
\begin{equation}\label{eq:transfer1}
\mathbf{G}(r)-\mathbf{G}(s)=C((rI-{A})^{-1}-(sI-{A})^{-1})B,
\end{equation}
and $\lim_{s\to\infty,s\in \R}\mathbf{G}(s)v$ exists for every $v\in U$.
%and $\mathbf{G}$ is proper, i.e., $\sup_{s\in\C_{\omega}}\|\mathbf{G}(s)\|<\infty$.
\end{enumerate}
Operators satisfying \ref{prop2} and \ref{prop3} are called {\it admissible} in the literature and naturally appear in the theory of  boundary control systems, cf.~\cite{Staf05,TucsWeis09}.
\tb{The function $\mathbf{G}$ is called a {\it transfer function} of $(A,B,C)$ and is uniquely determined up to a constant}.
{

From now on we will, without loss of generality, consider the complexification $L^2([0,1];\C^2)$ of the state space $X$
 and the linear operator $A$ defined in \eqref{eq:SVlin}--\eqref{eq:domA}.
The following is a simple exercise in the context of well-posed systems. We include a short proof for completeness.

\begin{Prop}\label{prop:sec3}
 Let $A$ and $\mu$ be defined as in \eqref{eq:SVlin}--\eqref{eq:domA}, $B=A_{-1}b \in \cB(\R;X_{-1})$ with $b=(0,-1)^\top$ and $C\in \cB(X_1;\C^{2})$ as defined in~\eqref{eq:opFTC}. Then
$A$ generates a contraction semigroup $(T(t))_{t\ge0}$ and the triple
 $(A,B,\begin{smallbmatrix}I\\C\end{smallbmatrix})$ is a regular well-posed system \tb{with} transfer function \[\mathbf{G}:\C_{+}\to \cB(\C;X\times \C\times\C), \mathbf{G}=\left(\mathbf{L}-b,\mathbf{H}+2\sqrt{\tfrac{h_{0}}{g}},0\right)\] where
  %$\lim_{s\to\infty,s\in \R}\mathbf{G}(s)=0$ and
\begin{align}
%\mathbf{L}(\lambda)\!=&\frac{\lambda}{g\theta_{\lambda}}\!\left(\tfrac{1-\cosh\left({\theta_{\lambda}}{}\right)}{\sinh\left({\theta_{\lambda}}{}\right)}\!\begin{smallbmatrix}
%-\cosh\left({\theta_{\lambda}\cdot}{}\right)\notag\\
%\frac{\lambda }{h_{0}\theta_{\lambda}}\sinh\left({\theta_{\lambda}\cdot}{}\right)
%\end{smallbmatrix}\!+\!\begin{smallbmatrix}
%-\sinh\left({\theta_{\lambda}\cdot}{}\right)\\
%\frac{\lambda }{h_{0}\theta_{\lambda}}\left(\cosh\left({\theta_{\lambda}\cdot}\right)-1\right)
%\end{smallbmatrix}\right)\\%\!-\!b\\
\mathbf{L}(\lambda)={}&\tb{\lambda(\lambda I-A)^{-1}b},\\
\mathbf{H}(\lambda)={}& -\sqrt{\tfrac{4h_0\lambda}{g(\lambda+2\mu)}}\,\tanh\left(\tfrac{\sqrt{\lambda(\lambda+2\mu)}}{2\sqrt{h_{0}g}}\right), \ \ \lambda\in\C_{+}.\label{eq:transferfct}
\end{align}
\tb{The restricted semigroup on $X_{\rm exp}=(\ker A)^{\perp}=[(\begin{smallmatrix}1\\0\end{smallmatrix})]^{\perp}$ is well-defined, $\|T(t)\|_{\cB(X_{\rm exp})}\leq\mathrm{e}^{t(-\mu+\mathrm{Re}\sqrt{\mu^{2}-\pi^{2}gh_{0}})}$ for all $t\ge0$ and $B\in \cB(\R;(X_{\rm exp})_{-1})$.}
\end{Prop}
\begin{proof} By standard arguments (e.g.\ a Fourier ansatz or Lumer--Phillips theorem), $A$ generates a contraction semigroup $(T(t))_{t\ge0}$ which even extends to a group, whence Property~\ref{prop1}.
\tb{Moreover, since $A$ has a compact resolvent, it follows that there exists an orthonormal basis of eigenvectors of $A$ with eigenvalues $\theta_{n}^{\pm}=-\mu\pm \mathrm{i}\sqrt{gh_{0}\pi^{2}n^2-\mu^{2}}$, $n\in\N$.
This shows that the semigroup leaves $\ker A=[(\begin{smallmatrix}1\\0\end{smallmatrix})]$ and its orthogonal complement $\ker A^{\perp}$ invariant and that $(T(t)|_{(\ker A)^{\perp}})_{t\ge0}$ has growth bound $\omega=-\mu+\mathrm{Re}\sqrt{\mu^{2}-\pi^{2}gh_{0}}<0$. }
Consider the holomorphic function $\lambda\mapsto(\lambda I-A)^{-1}B$ from $\C_{+}$ to $X$.  %To this end, using $B=A_\mu b$ we compute
%\mathbf{L}(\lambda)=
By the resolvent identity, we have that
%Since the system~\eqref{eq:subsys} is regular well-posed, the transfer functions can be computed by solving
\[
 (\lambda I-A)^{-1}B=\lambda(\lambda I-A)^{-1}b-b=\lambda z-b,
%\mathbf{H}^i(\lambda)\eta&=C_{i}x=x_1(1)+(-1)^ix_1(0).
\]
where $z$ can be \tb{computed by solving the ODE $\lambda z-Az=b$,
 %Invoking the definition of  the former leads to
%\begin{align*}
%x&=-b+\lambda(\lambda I-A_\mu)^{-1}b\eta,
%\mathbf{H}^i(\lambda)\eta&=x_1(1)+(-1)^ix_1(0).
%\end{align*}
%as the solution of the linear ordinary differential equation $\lambda z=A_\mu z+b$,
\begin{align*}
%\begin{bmatrix}
%z_1(\zeta)\\
%z_2(\zeta)
%\end{bmatrix}
z=\frac{1}{g\theta}\left(\tfrac{\cosh\left({\theta}{}\right)-1}{\sinh\left({\theta}{}\right)}\begin{smallbmatrix}
\cosh\left({\theta\zeta}{}\right)\\
-\frac{\lambda }{h_{0}\theta}\sinh\left({\theta\zeta}{}\right)
\end{smallbmatrix}+\begin{smallbmatrix}
-\sinh\left({\theta\zeta}{}\right)\\
\frac{\lambda }{h_{0}\theta}\left(\cosh\left({\theta\zeta}\right)-1\right)
\end{smallbmatrix}\right),
\end{align*}
with $\theta=\frac{1}{\sqrt{h_{0}g}}\sqrt{\lambda(\lambda+2\mu)}$.}
Since $(\lambda I-A)^{-1}B=\mathbf{L}(\lambda)-b$ is bounded in $\lambda$ on the half-plane $\mathbb{C}_{\mu+1}$, it follows that $B$ is admissible, Property \ref{prop2}, by \cite[Thm.~5.2.2]{TucsWeis09}. Thus, $(A,B,I)$ is well-posed. Similarly, one can show that Property \ref{prop3} holds for $C$ using \cite[Cor.~5.2.4]{TucsWeis09}.
%$\C_{-\mu}$ which implies that  \eqref{eq:lemadm1} holds as $\sup_{\RE\lambda=\mu+1}\|\lambda z_{\lambda}\|_{X}<\infty$. Similarly, \eqref{eq:lemadm2} follows.
%To show admissibility of $B$, Property \ref{prop2},
%\begin{align}
%\sup_{\RE\lambda=\mu+1}\|(\lambda I-A)^{-1}B\|_X<\infty,\label{eq:lemadm1}\\
% \sup_{\RE\lambda=\mu+1}\|C_{i}(\lambda I-A)^{-1}\|_{\cB(X;\C)}<\infty,\quad i=1,2,\label{eq:lemadm2}
% \end{align}
%by \cite[Rem.~2.11.3, Thm.~5.2.2 and Cor.~5.2.4]{TucsWeis09}.
Using the \tb{explicit} formula for \tb{$\mathbf{L}(\lambda) = \lambda z$} shows that $\mathbf{H}$ \tb{has the form as in~\eqref{eq:transferfct} and} is indeed a transfer function for $(A,B,C)$ and \tb{$\lim_{s\to\infty,s\in\R}\mathbf{G}(s)=0$}. Thus Property~\ref{prop4} holds. \tb{Clearly, $\ker A=[(\begin{smallmatrix}1\\0\end{smallmatrix})]$ and the fact that $A$ has an orthonormal basis of eigenvectors, yields that the semigroup is well-defined on $X_{\rm exp}=(\ker A)^{\perp}$ and the norm is bounded by the exponential related to the largest negative eigenvalue of $A$. Finally, since $b\in X_{\rm exp}$, it follows that the range of $B$ lies in $(X_{\rm exp})_{-1}$. }
% Furthermore, one can show that the function $\lambda\mapsto ((\lambda-\epsilon)I-A)^{-1}B$ lies in $\mathcal{H}^{2}(\C_{+},X)$
%Since $\mathbf{H}^i$ is proper and $\lim_{\RE \lambda\to\infty}\mathbf{H}^i(\lambda)$ exists, the system $(A_\mu, B, C_i)$ is well-posed and regular.%Thus,  $B, C_{1},C_{2}$ are $L^{2}$-admissible control/observation operators for $(T_\mu(t))_{t\ge 0}$.% and hence $B$ is also $L^{p}$-admissible for all $p\in[2,\infty]$ by the nesting property of $L^{p}$ spaces on compact intervals.
%Since $A$ generates a strongly continuous semigroup, it suffices to show that the classical solutions $(x,\eta,\psi)$ (in the sense of \cite[Def.~4.2]{TucsWeis14}) of \begin{align*}
%	\dot{x}(t)={}&Ax(t)+A_{-1}b\eta(t),\qquad x(0)=x_{0},\\
%	\psi(t)={}&C_{i}[x(t)-(\beta-A)^{-1}\eta(t)]+\mathbf{H}(\beta)\eta(t)
%\end{align*}
%satisfies $\frac{d}{dt}\|x(t)\|^{2}\leq 2\langle \eta(t),\psi(t)\rangle$ for all $t\ge0$.
\end{proof}

Next we show that the inverse Laplace transform of~$\mathbf{H}$ is a measure of bounded total variation on $\R_{\ge0}$, i.e., $\mathbf{H}\in {\rm M}(\R_{\ge0})$, where the total variation of $f\in{\rm M}(\R_{\ge0})$ is denoted by $\|f\|_{{\rm M}(\R_{\ge0})}$.%, \tb{see~\cite{Graf14} for more details.}

\begin{Lem}\label{lem:measure}
Let $\sigma_{n}=n\pi \sqrt{h_{0}g}$, $n\in\N$.
The function $\mathbf{H}:\C_{+}\!\to\C$ defined in \eqref{eq:transferfct} can be represented as
\[
    \mathbf{H}(\lambda)=-8h_{0}\sum_{n\in\N}\mathbf{H}_{n}(\lambda)=-8h_{0}
    \sum_{n\in2\N_{0}+1}\frac{\lambda}{\lambda^2+2\mu\lambda+\sigma_{n}^2},
\]
with inverse Laplace transform $\mathfrak{h}=\mathcal{L}^{-1}(\mathbf{H})\in {\rm M}(\R_{\ge0})$. % given by a measure of bounded total variation $\|\mathfrak{h}\|_{{\rm M}(\R_{\ge0})}$.
 Moreover,
\[
    \mathfrak{h}=\mathfrak{h}_{L^1}+\frac{1}{4c}\mathfrak{h}_{\delta}=\mathfrak{h}_{L^1}+\frac{1}{4c}\delta_{0}+\frac{1}{2c}\sum_{k\in\N} (-1)^{k}e^{-k\frac{\mu}{c}}\delta_{\frac{k}{c}},
\]
where $c=\sqrt{h_{0}g}$, and $\mathfrak{h}_{L^1}(t)=e^{-\mu t}(t^{2}\mathfrak{f}_{2}(t)+t\mathfrak{f}_1(t)+\mathfrak{f}_0(t))$, $t\ge 0$,
for some $\mathfrak{f}_{0},\mathfrak{f}_{1},\mathfrak{f}_{2}\in L^{\infty}(\R_{\ge0};\R)$.%, and where $\delta_t$ denotes the Dirac delta distribution at $t\in\R$.
\end{Lem}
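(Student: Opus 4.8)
The plan is to extract both representations directly from \eqref{eq:transferfct} by means of two expansions of the hyperbolic tangent. Write $c=\sqrt{h_0g}$ and $\theta=\theta(\lambda)=\tfrac1c\sqrt{\lambda(\lambda+2\mu)}$, so that, using $\sqrt{4h_0\lambda/(g(\lambda+2\mu))}=\tfrac{2h_0}{c}\sqrt{\lambda/(\lambda+2\mu)}=\tfrac{2h_0\lambda}{c^2\theta}$,
\[
\mathbf{H}(\lambda)=-\tfrac{2h_0\lambda}{c^2\theta}\,\tanh\!\big(\tfrac{\theta}{2}\big),\qquad\RE\lambda>0.
\]
For the rational series I would insert the Mittag--Leffler expansion $\tanh z=\sum_{k\ge0}\tfrac{2z}{z^2+(k+\frac12)^2\pi^2}$ at $z=\theta/2$, so that each summand reads $\tfrac{4\theta}{\theta^2+(2k+1)^2\pi^2}$; the $\theta$ cancels the numerator of the prefactor, and $\theta^2+(2k+1)^2\pi^2=c^{-2}(\lambda^2+2\mu\lambda+\sigma_{2k+1}^2)$, with $\sigma_n=n\pi c$, turns the $(n=2k+1)$-th term into $-8h_0\,\lambda/(\lambda^2+2\mu\lambda+\sigma_n^2)$. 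This gives the claimed sum over odd $n$, converging locally uniformly on $\C_+$ since the terms are $O(n^{-2})$.

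For the measure I would instead use the geometric expansion $\tanh(\theta/2)=1+2\sum_{k\ge1}(-1)^ke^{-k\theta}$, which converges because $\RE\theta>0$ on $\C_+$ and which exposes the delay structure. Splitting $\sqrt{\lambda(\lambda+2\mu)}=(\lambda+\mu)+\psi(\lambda)$ with $\psi(\lambda)=-\mu^2\big/\big(\sqrt{\lambda(\lambda+2\mu)}+\lambda+\mu\big)=O(1/\lambda)$, each exponential factorises as $e^{-k\theta}=e^{-k\mu/c}\,e^{-k\lambda/c}\,e^{-k\psi(\lambda)/c}$, where $e^{-k\lambda/c}=\mathcal{L}(\delta_{k/c})$ is a pure delay. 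Since the prefactor tends to $-2h_0/c$ and $e^{-k\psi/c}\to1$ as $\lambda\to+\infty$, reading off these limits identifies the atomic part of $\mathfrak{h}$ as a weighted Dirac comb on $\{0\}\cup\{k/c:k\in\N\}$, the weight at $k/c$ being $2(-1)^ke^{-k\mu/c}$ times the weight at $0$; computing $\lim_{\lambda\to+\infty}\mathbf{H}(\lambda)$ fixes the normalisation and reproduces the atomic part $\tfrac1{4c}\delta_0+\tfrac1{2c}\sum_{k\in\N}(-1)^ke^{-k\mu/c}\delta_{k/c}$.

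The regular part $\mathfrak{h}_{L^1}$ is the inverse transform of what remains once the atomic symbol is subtracted. Every remaining contribution carries a factor decaying like $O(1/\lambda)$ --- arising from $\sqrt{\lambda/(\lambda+2\mu)}-1$ and from $e^{-k\psi/c}-1$ --- multiplying a delay $e^{-k\lambda/c}$; after the shift $\lambda\mapsto\lambda+\mu$ such factors invert to $e^{-\mu t}$ times modified Bessel profiles $I_{0},I_{1}\big(\mu\sqrt{t^{2}-(k/c)^{2}}\big)$ supported on $t\ge k/c$, which stay bounded once the damping weight is applied. At time $t$ only the fronts with $k\le ct$ have arrived, so $\mathfrak{h}_{L^1}(t)$ is a finite superposition of $O(t)$ damped wakes; estimating these (with the alternating signs supplying the necessary cancellation, since the individual wakes are not summable in $L^1$) yields a majorant of the form $e^{-\mu t}(c_2t^2+c_1t+c_0)$, that is, the asserted representation $\mathfrak{h}_{L^1}(t)=e^{-\mu t}(t^2\mathfrak{f}_2(t)+t\mathfrak{f}_1(t)+\mathfrak{f}_0(t))$ with $\mathfrak{f}_0,\mathfrak{f}_1,\mathfrak{f}_2\in L^\infty(\R_{\ge0};\R)$.

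Membership $\mathfrak{h}\in{\rm M}(\R_{\ge0})$ then follows by adding the two contributions: the atoms have finite total variation $\tfrac1{4c}+\tfrac1{2c}\sum_{k\ge1}e^{-k\mu/c}=\tfrac1{4c}+\tfrac{1}{2c}\tfrac{e^{-\mu/c}}{1-e^{-\mu/c}}<\infty$ (convergent because $\mu,c>0$), while $\|\mathfrak{h}_{L^1}\|_{L^1}\le\tfrac{2\|\mathfrak{f}_2\|_\infty}{\mu^3}+\tfrac{\|\mathfrak{f}_1\|_\infty}{\mu^2}+\tfrac{\|\mathfrak{f}_0\|_\infty}{\mu}<\infty$ since $t\mapsto e^{-\mu t}t^{j}$ is integrable. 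I expect the main obstacle to be precisely the rigorous treatment of $\mathfrak{h}_{L^1}$: inverting the branch-cut factors $\sqrt{\lambda/(\lambda+2\mu)}$ and $e^{-k\psi(\lambda)/c}$, and controlling the superposition of the growing number ($\sim ct$) of overlapping Bessel wakes to obtain the polynomial-times-exponential majorant, together with justifying the term-by-term passage through $\mathcal{L}^{-1}$ in the sense of measures --- the atomic series converges in total variation, whereas the wake series must be summed pointwise and then identified via dominated convergence on compact subsets of $\C_+$.
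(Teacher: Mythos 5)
Your first step (the Mittag--Leffler expansion of $\tanh$) is exactly the paper's, but your route to the measure decomposition is genuinely different. The delay expansion $\tanh(\theta/2)=1+2\sum_{k\ge1}(-1)^ke^{-k\theta}$ combined with the splitting $\sqrt{\lambda(\lambda+2\mu)}=(\lambda+\mu)+\psi(\lambda)$ exposes the Dirac comb directly and is arguably the more transparent way to see the reflection structure; the paper instead inverts each rational term $\mathbf{H}_n$ into a damped sinusoid at the perturbed frequency $\phi_n=\sqrt{\sigma_n^2-\mu^2}$ and then resums the classical Fourier series $\sum_{n\,\mathrm{odd}}\cos(\sigma_n t)$ and $\sum_{n\,\mathrm{odd}}\sigma_n^{-1}\sin(\sigma_n t)$ at the unperturbed frequencies to extract the comb and the square wave. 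Your normalisation check $\lim_{\lambda\to+\infty}\mathbf{H}(\lambda)=-2h_0/c=-8h_0\cdot\tfrac{1}{4c}$ correctly identifies the atom at $0$ and, incidentally, shows that the prefactor $-8h_0$ from the series representation must multiply the displayed decomposition of $\mathfrak{h}$. Your route also yields $\mathfrak{h}\in{\rm M}(\R_{\ge0})$, which is the only conclusion of the lemma used later: each Bessel wake is bounded and integrable, and the weights $e^{-k\mu/c}$ already make the wake series absolutely convergent in $L^1$ and uniformly bounded --- contrary to your parenthetical claim, no sign cancellation is needed for summability.

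The genuine gap is the asserted form $\mathfrak{h}_{L^1}(t)=e^{-\mu t}\bigl(t^2\mathfrak{f}_2(t)+t\mathfrak{f}_1(t)+\mathfrak{f}_0(t)\bigr)$ with bounded $\mathfrak{f}_i$, which forces exponential decay of $\mathfrak{h}_{L^1}$. Your building blocks do not decay exponentially: $\sqrt{\lambda/(\lambda+2\mu)}-1$ and each $e^{-k\theta}-e^{-k(\lambda+\mu)/c}$ carry a $\sqrt{\lambda}$-type branch point at $\lambda=0$, so their inverse transforms decay only algebraically; concretely, $e^{-\mu t}\,\tfrac{(k/c)\mu}{\sqrt{t^2-(k/c)^2}}I_1\bigl(\mu\sqrt{t^2-(k/c)^2}\bigr)=O(t^{-3/2})$ as $t\to\infty$, since $e^{-\mu t}I_1(\mu\sqrt{t^2-a^2})\sim(2\pi\mu t)^{-1/2}$. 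The branch point at $0$ is spurious --- $\mathbf{H}$ is analytic on $\RE\lambda>-\mu$, its only singularities being the poles $-\mu\pm i\phi_n$ --- so the algebraic tails must cancel in the alternating sum, but a term-by-term majorant of your wakes can only produce $O(t^{-3/2})$, never $e^{-\mu t}$ times a polynomial, and your sketch supplies no mechanism for the cancellation. The paper sidesteps this entirely by never leaving the shifted half-plane: it writes $e^{-\mu t}\mathfrak{g}_n(t)$, applies the mean value theorem to $\cos(\phi_nt)-\cos(\sigma_nt)$ and $\sin(\sigma_nt)-\sin(\phi_nt)$ to generate absolutely summable coefficient sequences multiplying $t^2$, $t$, $1$, and keeps the common factor $e^{-\mu t}$ intact throughout. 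So your proposal proves the series representation, the atomic part, and $\mathfrak{h}\in{\rm M}(\R_{\ge0})$, but not the stated structure of $\mathfrak{h}_{L^1}$; for that you would need to exhibit the tail cancellation explicitly or switch to the paper's resummation around the frequencies $\sigma_n$.
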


\begin{proof}
The \tb{asserted} series representation of $\mathbf{H}$, \tb{with $\mathbf{H}_{n}(\lambda)=\frac{\lambda}{\lambda^2+2\mu\lambda+\sigma_{n}^2}$}, follows from~\eqref{eq:transferfct} and the following \tb{well-known} series representation of the hyperbolic tangent,
$$\tanh(z)=8z\sum_{k=1}^{\infty}\frac{1}{\pi^{2}(2k-1)^{2}+4z^{2}}, \quad z\notin i\pi(1+2\mathbb{Z}).$$
Next we study the inverse Laplace transform of $\mathbf{H}$; in particular, $\mathbf{H}_n(\lambda)=0$ for $n\in 2\N_0$.
It is clear that $\mathbf{H}$ is also continuous on $\overline{\C_{+}}$ and that the series converges locally uniformly along the imaginary axis.
Thus, the partial sums converge to $\alpha\mapsto\mathbf{H}(i\alpha)$ in the distributional sense when considered as tempered distributions on $i\R$. By continuity of the Fourier transform $\mathcal{F}$, this gives that the series
\[
    -8h_0\sum_{n\in\N}\mathcal{F}^{-1}(\mathbf{H}_{n}(i\cdot))=-8h_0\sum_{n\in\N}\mathcal{L}^{-1}(\mathbf{H}_{n})
\]
converges to $\mathfrak{h}=\mathcal{F}^{-1}(\mathbf{H}(i\cdot))=\mathcal{L}^{-1}(\mathbf{H})$  in the distributional sense\footnote{We identify functions on $\R_{\ge 0}$ with their trivial extension to $\R$.}. It remains to study $\mathcal{L}^{-1}(\mathbf{H}_{n})$ and \tb{to show that the limit of the corresponding sum is in  ${\rm M}(\R_{\ge0})$}. By known rules for the Laplace transform, $\mathcal{L}^{-1}(\mathbf{H}_{n})(t)=e^{-\mu t}\mathfrak{g}_{n}(t)$ for $t\ge0$, with $\phi_{n}=\sqrt{\sigma_{n}^{2}-\mu^{2}}$ and
$$\mathfrak{g}_{n}(t)=\cos(\phi_{n} t)-\mu\phi_{n}^{-1}\sin(\phi_{n} t), \quad n\in2\N_0+1.$$
The idea is to use Fourier series that are related to the frequencies $\sigma_{n}$ in contrast to the `perturbed' harmonics $\sin \phi_{n}$ and $\cos \phi_{n}$. We write
\begin{align*}
\mathfrak{g}_n(t)={}&\left[\cos(\phi_{n}t)-\cos(\sigma_{n}t)\right]+\frac{\mu}{\phi_{n}}\left[\sin(\sigma_{n}t)-\sin(\phi_{n}t)\right]\\
&+\cos(\sigma_{n}t)+\frac{\mu}{\phi_{n}}\sin(\sigma_{n}t)
\end{align*}
\tb{and investigate each term in the sum separately.}
By the mean value theorem there exist $\alpha_n,\beta_n\in[\phi_n,\sigma_n]$ and $\omega_{n}\in[\alpha_{n},\sigma_{n}]$ such that
\begin{align*}
\cos(\phi_{n}t)-\cos(\sigma_{n}t)&=t(\sigma_{n}-\phi_{n})\sin(\alpha_{n}t)
=\frac{\mu^2t \sin(\alpha_{n}t)}{\sigma_{n}+\phi_{n}},\\
\sin(\alpha_{n}t)&=t(\alpha_{n}-\sigma_{n})\cos(\omega_{n}t)+\sin(\sigma_{n}t),
\\
\sin(\sigma_{n}t)-\sin(\phi_{n}t)&=t(\sigma_{n}-\phi_{n})\cos(\beta_{n}t)
=\frac{\mu^2t \cos(\beta_{n}t)
}{\sigma_{n}+\phi_{n}},
\end{align*}
\tb{where we used that $\sigma_{n}^{2}-\phi_{n}^{2}=\mu^{2}$}. Hence,
\begin{align*}
\mathfrak{g}_n(t)={}&t^{2}\frac{\mu^2(\alpha_{n}-\sigma_{n})}{\sigma_{n}+\phi_{n}}\cos(\omega_{n}t)+\frac{\mu^3t}{\phi_{n}(\sigma_{n}+\phi_{n})}\cos(\beta_{n}t)\\
&+\cos(\sigma_{n}t)
+\left(t(\sigma_{n}-\phi_{n})+\frac{\mu}{\phi_{n}}\right)\sin(\sigma_{n}t).
\end{align*}
The coefficient sequences of  the first two terms in the sum,
$$a_{n}:=\mu^{2}\frac{\alpha_{n}-\sigma_{n}}{\sigma_{n}+\phi_{n}},\quad b_{n}:=\frac{\mu^{3}}{\phi_{n}(\sigma_{n}+\phi_{n})},$$
are absolutely summable sequences \tb{as $\sigma_{n}+\phi_{n}\lesssim n$ and }
\[
    0 > a_n > \mu^{2}\frac{\phi_{n}-\sigma_{n}}{\sigma_{n}+\phi_{n}} = \frac{-\mu^4}{(\sigma_{n}+\phi_{n})^2}.
\]
Let us  rewrite the coefficient of the last term, recalling that
$\sigma_{n}^{2}-\phi_{n}^{2}=\mu^{2}$ implies that
 $\frac{1}{\sigma_{n}+\phi_{n}}-\frac{1}{2\sigma_{n}}=\frac{\mu^{2}}{2\sigma_{n}(\sigma_{n}+\phi_{n})^{2}}$,
\begin{align*}
t(\sigma_{n}-\phi_{n})={}&\frac{\mu^2t}{\sigma_{n}+\phi_{n}}=\frac{\mu^4t}{2\sigma_{n}(\sigma_{n}+\phi_{n})^2}
+\frac{\mu^2t}{2\sigma_{n}},\\
\frac{\mu}{\phi_{n}}={}&\frac{\mu}{\phi_{n}}+\frac{\mu}{\sigma_{n}}-\frac{\mu}{\sigma_{n}}=
\frac{\mu}{\sigma_{n}}+\frac{\mu^{3}}{\sigma_{n}\phi_{n}(\sigma_{n}+\phi_{n})}.
\end{align*}
Thus, with $c_{n}=\frac{\mu^4}{2\sigma_{n}(\sigma_{n}+\phi_{n})^2}$ and $d_{n}=\frac{\mu^{3}}{\sigma_{n}\phi_{n}(\sigma_{n}+\phi_{n})}$, which define absolutely summable sequences, we have \tb{that}
\begin{align*}
\mathfrak{g}_n(t)&=t^{2}a_{n}\cos(\omega_{n}t)+tb_{n}\cos(\beta_{n}t)+\left(tc_{n}+d_{n}\right)\sin(\sigma_{n}t)\\
%&+\left[\frac{\mu^4t}{2\sigma_{n}(\sigma_{n}+\phi_{n})^2}+\frac{\mu^3}{\sigma_{n}\phi_{n}(\sigma_{n}+\phi_{n})}\right]\sin(\sigma_{n}t)\\
&\quad +\cos(\sigma_{n}t)+(\mu t+2)\frac{\mu}{2\sigma_{n}}\sin(\sigma_{n}t).
\end{align*}
\tb{Multiplying with $\mathrm{e}^{-\mu t}$, it is clear that the sums of terms involving $a_{n},b_{n},c_{n},d_{n}$ converge in the $L^{1}$-norm. Thus, it remains to estimate the last two terms in $\mathfrak{g}_{n}$ above. }
As $\sigma_{n}=n\pi c$, the sum $\sum\nolimits_{n\in2\N_{0}+1}\frac{4c}{\sigma_{n}}\sin(\sigma_{n}t)$ converges to
\[
    H_{0}(t)= (-1)^k\quad \text{for } t\in [k/c, (k+1)/c),\ k\in \N_0,%\left\{\begin{array}{rl}
              %  1, & t\in [2k/c,(2k+1)/c),\ k\in\N_0\\ -1, & t\in [(2k+1)/c,(2k+2)/c),\ k\in\N_0\end{array}\right.
\]
for almost all $t\ge 0$. Therefore, for almost all $t\ge0$,
$$\sum_{n\in2\N_{0}+1}\frac{\mu}{2\sigma_{n}}\sin(\sigma_{n}t)= \frac{\mu}{8c}H_{0}(t).$$
Since the coefficients $\frac{\mu}{\sigma_{n}}$ are square summable, the series even converges in $L^{2}$ on any bounded interval and thus particularly in the distributional sense on $\R_{\ge 0}$.\newline
Finally, by known facts on the Fourier series of delta distributions, $4c\sum_{n\in2\N_{0}+1}\cos(\sigma_{n}\cdot)$ converges to the $\frac{2}{c}$-periodic extension of
$(\delta_{0}-2\delta_{\frac{1}{c}} + \delta_{\frac{2}{c}})$ in the distributional sense as
\begin{align*}
&\lim_{N\to\infty}\left\langle 4c\!\sum_{n=1,\, n\,\text{odd}}^{N}\! \cos(\sigma_{n}\cdot),\psi\right\rangle%\\
%&=\!\lim_{N\to\infty}\int_{0}^{\frac{2}{c}}4c\underset{n\text{ odd}}{\sum_{n=1}^{N}}\cos(\sigma_{n}s)\psi(s)
%\ds{s} \!=\!
=\left\langle \delta_{0}-2\delta_{\frac{1}{c}} + \delta_{\frac{2}{c}},\psi\right\rangle
\end{align*}
 for any function $\psi\in C^{\infty}([0,\frac{2}{c}];\R)$.
Altogether, and as multiplying with $e^{-\mu t}$ preserves the distributional convergence,
this yields that
\begin{align*}
	\sum_{n\in 2\N_{0}+1}\mathcal{L}^{-1}(\mathbf{H}_{n})(\cdot)=\!\sum_{n\in 2\N_{0}+1}e^{-\mu \cdot}\mathfrak{g}_{n}(\cdot)=\mathfrak{h}_{L^1}(\cdot)+\frac{1}{4c}\mathfrak{h}_{\delta}
\end{align*}
with $\mathfrak{h}_{L^1}$, $\mathfrak{h}_{\delta}$ as in the assertion and where the functions
\begin{align*}
\mathfrak{f}_2(t):={}&\sum_{n\in2\N_{0}+1}a_{n}\cos(\omega_{n}t)\\
\mathfrak{f}_1(t):={}&\frac{\mu^{2}}{8c}H_{0}(t)+\sum_{n\in2\N_{0}+1}b_{n}\cos(\beta_{n}t)+c_{n}\sin(\sigma_{n}t),\\
\mathfrak{f}_0(t):={}&\frac{\mu}{4c}H_{0}(t)+\sum_{n\in\N}d_{n}\sin(\sigma_{n}t),\qquad t\ge 0,
\end{align*}
are bounded since  $a_{n},b_{n},c_{n}, d_{n}$ are absolutely summable sequences. By this representation, $\mathfrak{h}_{L^1}\in L^{1}(\R_{\ge0};\R)$ and can thus be identified with an element in ${\rm M}(\R_{\ge 0})$, while $\mathfrak{h}_{\delta}\in {\rm M}(\R_{\ge0})$ as
$\|\mathfrak{h}_{\delta}\|_{{\rm M}(\R_{\ge0})}=1 + 2\sum_{k\in\N}e^{-\mu \frac{k}{c}}<\infty$.
\end{proof}

%\tb{
%\begin{Lem}\label{lem:ABI}
%Let $(A,B,I)$ be a regular well-posed system and assume that $A$ generates a semigroup $(T(t))_{t\ge0}$ on a space $X_{\rm exp}$ with growth bound $\omega_A < 0$ . Then the mapping
%\[L^{\infty}(\R_{\ge0};\R)\to \cC_{b}(\R_{\ge0};X_{\rm exp} ), u\mapsto \int_{0}^{\cdot}T_{-1}(\cdot-s)Bu(s)\mathrm{d}s\]
%is well-defined and bounded, where $\cC_{b}=\cC\cap L^{\infty}$.
%%The result remains valid if $(T(t))_{t\ge0}$ is replaced by the semigroup generated by $A+(\begin{smallmatrix}0&0\\0&\tilde{\mu}\end{smallmatrix})$ for $\tilde{\mu}\in L^{\infty}([0,1];\R)$ with $\tilde{\mu}\leq 0$.
%\end{Lem}
%\begin{proof}
%%Choose $\epsilon>0$ such that $A+\epsilon I$ still generates an exponentially stable semigroup.
%%
%Let $u\in L^{\infty}(\R_{\ge0};U)$. Since $B$ is admissible by Property (b), it follows from~\cite[Prop.~4.2.4]{TucsWeis09} that, for all $t\ge 0$, $t\mapsto\int_{0}^{t}T_{-1}(t-s)Bu(s)\mathrm{d}s\in \cC(\R_{\ge0};X)$ and, since $\omega_A<0$, there exists a constant $C$ independent of $t$ and $u$ such that
%%that $t\mapsto\int_{0}^{t}T_{-1}(t-s)Bu(s)\mathrm{d}s\in \cC(\R_{\ge0};X)$,  and that
%\[\left\|\int_{0}^{t}T_{-1}(t-s)u(s)\mathrm{d}s\right\|\leq C \|u\|_{L^{2}([0,t];U)}.\]
%By rescaling the semigroup we may replace $u(s)$ by $\mathrm{e}^{-\varepsilon(t-s)}u(s)$, for some $\varepsilon>0$, and further estimate the term on the right hand side by $C(2\varepsilon)^{-\frac{1}{2}}\|u\|_{L^{\infty}(\R_{\ge0};U)}$, invoking Young's convolution inequality.
%%By exponential stability we may choose
%\end{proof}
%}

For regular well-posed systems, it is convenient to consider an extension of the observation operator $C\in\cB(X_1;Y)$, the so-called $\Lambda$-extension $C_{\Lambda}:\cD(C_{\Lambda})\to Y$, defined by
\[ C_{\Lambda}x= \lim_{\lambda\to\infty}\lambda C(\lambda I-A)^{-1}x\]
 with $\cD(C_{\Lambda})=\setdef{x\in X }{ \lim_{\lambda\to\infty}\lambda C(\lambda I-A)^{-1}x\text{ exists}}$. It is easy to see that this indeed defines an extension of $C$, cf.~\cite{TucsWeis14}. In the following we will replace the operator~$C$ from~\eqref{eq:opFTC} by its $\Lambda$-extension; thus, in particular, in~\eqref{eq:opFTC},
 \begin{equation}\label{eq:domC}
    \cD(C) = \cD(C_\Lambda).
 \end{equation}
Recall that the unique strong solution of~\eqref{eq:Teq0} is given by~\eqref{eq:strongsol}.

\begin{Prop}\label{prop:BIBO}
Let $x_{0}\in X$ and $v_0\in\C$ such that $x_0 + b v_0\in\cD(A)$. Then
\[\mathcal{F}:\cC(\R_{\ge 0};\C)\to L_{\mathrm{loc}}^{\infty}(\R_{\ge 0};X\times\C^{2}),\eta\mapsto \begin{smallbmatrix}I\\C_{\Lambda}\end{smallbmatrix}x,\]
with $x$ as in~\eqref{eq:strongsol}, is well-defined and
%\begin{multline*}
\begin{equation}\label{eq99}
  \|\mathcal{F}(\eta)\|_{\infty}\lesssim \|x_{0}\|+\|Ax_{0}\|+\|\eta\|_{\infty}
\end{equation}
%\end{multline*}}
for all $\eta\in\cC(\R_{\ge 0};\C)\cap L^\infty(\R_{\ge 0};\C)$.
\end{Prop}
\begin{proof}
\tb{Since $(A,B,\begin{smallbmatrix}I\\C\end{smallbmatrix})$ is a regular, well-posed system by Proposition~\ref{prop:sec3} with transfer function $\mathbf{G}$,
it follows that $x(t)\in \cD(\begin{smallbmatrix}I\\C_{\Lambda}\end{smallbmatrix})$ for a.e.\ $t>0$ and that $\mathcal{F}$ is well-defined as a mapping from $L_{\mathrm{loc}}^{2}(\R_{\ge 0};\C)$ to $L_{\mathrm{loc}}^{2}(\R_{\ge 0};X\times\C^{2})$,  see e.g.~\cite[Thm.~5.3]{TucsWeis14} or \cite[Thm.~5.6.5]{Staf05}. We will discuss the two components of the mapping $\mathcal{F}$ separately. By Proposition \ref{prop:sec3}, we have that the semigroup restricted to $X_{\rm exp}$ has a negative growth bound and $B\in\mathcal{B}(\R,(X_{\rm exp})_{-1})$ is admissible, thus it follows from~\cite[Prop.~4.2.4]{TucsWeis09} that, for all $\eta\in\cC(\R_{\ge 0};\C)\cap L^\infty(\R_{\ge 0};\C)$ and all $t\ge 0$, $t\mapsto\int_{0}^{t}T_{-1}(t-s)Bu(s)\mathrm{d}s\in \cC(\R_{\ge0};X)$ and there exists a constant $c$ independent of $t$ and $u$ such that
%that $t\mapsto\int_{0}^{t}T_{-1}(t-s)Bu(s)\mathrm{d}s\in \cC(\R_{\ge0};X)$,  and that
\[\left\|\int_{0}^{t}T_{-1}(t-s)u(s)\mathrm{d}s\right\|\leq c \|\eta\|_{L^{2}([0,t];\C)}.\]
By rescaling the semigroup we may replace $u(s)$ by $\mathrm{e}^{-\varepsilon(t-s)}u(s)$, for some $\varepsilon>0$, and further estimate the term on the right hand side by $c(2\varepsilon)^{-\frac{1}{2}}\|\eta\|_{\infty}$, invoking Young's convolution inequality. Therefore, $\mathcal{F}_{1}:\eta\mapsto x$ maps $\cC(\R_{\ge 0};\C)$ to $L_{\mathrm{loc}}^{\infty}(\R_{\ge 0};X)$ with $$\|\mathcal{F}_{1}(\eta)\|_{\infty}=\|x(t)\|_{\infty}\lesssim \|x_{0}\|+\|\eta\|_{\infty}.$$
Since $\lim_{s\to\infty,s\in\R}\mathbf{G(s)}=0$, we have for $\mathcal{F}_{2}:\eta\mapsto C_{\Lambda}x$ that
%is  bounded on $\C_{+}$. Therefore,  see e.g.~\cite[Thm.~5.3]{TucsWeis14}, $\mathcal{F}$ is \tb{defined} on $L_{\mathrm{loc}}^{2}(\R_{\ge 0};\C)$  and maps $L^{2}(\R_{\ge 0};\C)$  to $L^{2}(\R_{\ge 0};X\times\C^2)$ with
\[
    \mathcal{L}({\mathcal{F}_{2}(\eta)})(s)=C_{\Lambda}(sI-A)^{-1}x_{0}+\mathbf{G}_{2}(s)\cdot\mathcal{L}({\eta})(s)
\]
for all $s\in \C_{+},\eta\in L^{2}(\R_{\ge 0};\C)$ and where $\mathbf{G}_{2}=(\mathbf{H}+2\sqrt{h_{0}/g},0)$ are the functions defined in Proposition~\ref{prop:sec3}. To show that $\mathcal{F}_{2}$ is well-defined from $\mathcal{C}(\R_{\ge 0};\C)$ to  $L_{\mathrm{loc}}^{\infty}(\R_{\ge 0};\C^{2})$, it suffices to consider only bounded, continuous functions $\eta$ and to show that~\eqref{eq99} holds, as the rest follows by causality.  We identify $\mathcal{L}^{-1}(\mathbf{G}_{2})$ and $\eta$ with their trivial extensions to $\R$ and
 get for a.a.\ $t>0$ that
\begin{align} \mathcal{F}_{2}(\eta)(t)={}&C_{\Lambda}T(t)x_{0}+(\mathcal{L}^{-1}(\mathbf{G}_{2})\ast \eta)(t)\notag\\
={}&C_{\Lambda}T(t)(x_{0} + bv_0)+(\mathcal{L}^{-1}(\mathbf{G}_{2})\ast \tilde{\eta})(t), \label{prop33:eq1}
%{}&-v_0\begin{smallbmatrix}I\\C\end{smallbmatrix}T(t)b +(\mathcal{L}^{-1}(\mathbf{G})\ast \eta)(t).
\end{align}
with $\tilde{\eta}(t)=\mathrm{e}^{t}v_{0}\chi_{\R_{\le 0}}(t)+\eta(t)$ and where $\chi_{\R\leq0}$ denotes the indicator function on $\R_{\le0}$. Here, we used the fact that
\begin{align*}
-C_{\Lambda}T(t)b={}&C_{\Lambda}\int_{-\infty}^{0}T_{-1}(t-s)B\mathrm{e}^s\mathrm{d}s\\
	={}&(\mathcal{L}^{-1}(\mathbf{G}_{2})\ast \mathrm{e_{1}}\chi_{\R\leq0})(t)
\end{align*}
for a.a.\ $t>0$, with $\mathrm{e}_{1}(s)=\mathrm{e}^{s}$, $s\in\R$.
The first term on the right hand side of \eqref{prop33:eq1} is uniformly bounded on $\R_{\ge 0}$,
%$\|\begin{smallbmatrix}I\\C\end{smallbmatrix}(I-A)^{-1}\|_{\cB(X;\C^2)} \|(I-A)x_{0}\|_{X}$
because $x_{0}+bv_{0}\in \cD(A)$, $\|T(t)\|\leq1$ for all $t\ge0$ by Proposition \ref{prop:sec3} and thus
\[\|C_{\Lambda}T(t)(x_{0} + bv_0)\|\leq\|C_{\Lambda}A^{-1}\|_{\mathcal{B}(X;\C^{2})}\|A(x_{0} + bv_0)\|.\]
The uniform boundedness of the second term \tb{in \eqref{prop33:eq1}} follows since $\mathcal{L}^{-1}(\mathbf{G}_{2})$ is of bounded total variation by Lemma~\ref{lem:measure} and thus defines a bounded convolution operator with respect to the supremum norm, cf.~\cite{Graf14}.}
% this reduces to proving that $\mathcal{L}^{-1}(\mathbf{L})\in \mathrm{M}(\R_{\ge0})$. Moreover, we claim that $\mathcal{L}^{-1}(\mathbf{L})(t)=\mathrm{e}^{-\epsilon t}f(t)$, $t>0$, for some $f\in L^{2}(\R_{\ge0};X)$ and $\epsilon>0$. By the vector-valued Paley-Wiener theorem, this holds if, and only if, $\mathbf{L}(\cdot-\epsilon)$ lies in the Hardy space $\mathcal{H}^2(\C_{+};X)$ of holomorphic functions $G:\C_{+}\to X$ such that $\sup_{x>0}\|G(x+i\cdot)\|_{L^{2}(\R_{\ge0};X)}<\infty$. The latter finally follows for sufficiently small $\epsilon\in(0,\mu)$ by the \tb{explicit} formula of $\mathbf{L}$ from Proposition~\ref{prop:sec3} and elementary estimates.
\end{proof}

%%%%%%%%%
\section{Extensions}\label{Sec:Ext}
%%%%%%%%

\tb{
In this section we consider some extensions (other steady states, space-dependent friction) of the results of Sections~\ref{Sec:FunCon} and~\ref{Sec:LinMod}. Although the linearization around the steady state $(h_0,0)$ considered in the previous sections is the most relevant from a control theoretic viewpoint, cf. Remark~\ref{Rem:Ext}\,(ii) below, other scenarios may be of interest.
%However, to fully explore these extensions, a more abstract approach is required, which is out of the scope of the present paper.
In the following we restrict ourselves to deriving the new model, when~\eqref{eq:SVeq} is linearized around other steady states, and indicate how the approach of Sections~\ref{Sec:FunCon} and~\ref{Sec:LinMod} can be extended.}

%and hinting at an abstract framework.}

\tb{
First we like to note that the steady state with constant water level $h=h_{0}$ and zero velocity $v=0$ is not the only choice for a stationary solution of~\eqref{eq:SVeq}. Indeed, considering as steady states the solutions of the overall system constituted by \eqref{eq:SVeq}, \eqref{eq:momentum} and $\dot{p}(t)=u(t)$ in the variables $h,v,u$, which are constant in time, the steady states are given by all solutions $H, V:[0,1]\to\R$ and $U\in\R$, with $H$ being strictly positive, of
\begin{align*}
	\partial_{\zeta}(HV)={}&0,\\
	\partial_{\zeta}\left(\frac{V^{2}}{2}+gH\right)+HS\left(\frac{V}{H}\right)={}&-\frac{U}{m+h_{0}},
\end{align*}
where the right-hand side of the second equation is derived from taking the time-derivative in \eqref{eq:momentum} and using $h_{0}=\int_{0}^{1}H(\zeta)\mathrm{d}\zeta$. By the boundary conditions for the velocity, we conclude from the first equation that $V=0$. Using that $S(0)=0$, the second equation thus becomes
%\[Hg\partial_{\zeta}H=-\frac{HU}{m+h_{0}}.\]
\[g\partial_{\zeta}H=-\frac{U}{m+h_{0}}.\]
Since $H$ is strictly positive with $h_{0}=\int_{0}^{1}H(\zeta)\mathrm{d}\zeta$, this is solved by the function
\begin{equation}\label{eq:steadyH}
    H(\zeta)= -\frac{U}{m+h_{0}}\zeta+\frac{U}{2(m+h_{0})} + h_0,
\end{equation}
provided that $|U| < 2h_0 (m+h_0)$. The case $U=0$ corresponds to $H\equiv h_0$, which leads to the linearization~\eqref{eq:SVlin}.
}

\vspace{-5mm}

\tb{
\begin{Rem}\label{Rem:Ext}
\begin{enumerate}
  \item In order to derive the steady states for control values $U$ with  $|U| \ge 2h_0 (m+h_0)$ a different approach must be taken, which is only sketched here for completeness. In such cases the height profile~$H$ will no longer be strictly positive in general. Therefore, assuming that~$S$ is bounded by a polynomial of order $\alpha$, i.e., $|S(z)|\le \sum_{i=0}^\alpha c_i |z|^i$, we may multiply the second equation in~\eqref{eq:SVeq} on both sides with $h^\alpha$, leading to a partial differential-algebraic equation (PDAE). Computing the steady states of this equation again leads to $V=0$ and~$H$ is determined by
      \[H^\alpha g\partial_{\zeta}H=-\frac{H^\alpha U}{m+h_{0}},\]
      which has, under the additional condition that $h_{0}=\int_{0}^{1}H(\zeta)\mathrm{d}\zeta$, unique weak solutions that may be zero on a subinterval of $[0,1]$, and linear otherwise~-- we leave the exact computation to the reader. Let $H^*$ be such a solution, then the steady states are $(H^*, 0, U)$, where $U\in\R$ is no longer restricted. The left-hand side of the linearization of the original PDAE around this steady state then reads $\begin{smallpmatrix} \partial_t z_1 \\ (H^*)^\alpha\partial_t z_2\end{smallpmatrix}$, and the second component vanishes whenever $H^*$ is zero. We may further compute that this is also true for the second component of the right-hand side, so only the first equation, reading $\partial_t z_1 = 0$ is present whenever $H^* = 0$. Therefore, the linearization is also a PDAE and cannot be described in a simple form as in~\eqref{eq:SVlin}.
  \item On the other hand, the linearization around steady states with non-zero control values may be of limited interest from a control theoretic viewpoint, since typically situations are considered where the system is steered from one operating point to another, i.e., the control input has compact support. This rather suggests to consider equilibria with zero control value $U=0$ in order to conclude that the controlled linearized system approximates the controlled nonlinear system in a certain sense.
\end{enumerate}
\end{Rem}
}

\tb{
Linearizing around the steady state $(H,0,U)$ with~$H$ as in~\eqref{eq:steadyH} gives the following generalization of \eqref{eq:SVlin},
%\[H(\zeta)=\begin{cases}-\frac{U}{m+h_{0}}\zeta+\sqrt{\frac{2h_{0}|U|}{m+h_{0}}}&\zeta < \frac{\sqrt{2h_{0}(m+h_{0})}}{\sqrt{U}}\\0& \zeta \ge \frac{\sqrt{2h_{0}(m+h_{0})}}{\sqrt{U}}\end{cases},\]
\begin{equation}\label{eq:SVlin2}
    \partial_t z = -\begin{bmatrix}0&H\partial_{\zeta}+(\partial_{\zeta}H) \\g\partial_\zeta &2\mu\end{bmatrix}z+\begin{pmatrix}0\\-1\end{pmatrix}\ddot{y}%-\big(h_0 \partial_\zeta z_2, g \partial_\zeta z_1 + S'(0) z_2\big)=P_1\partial_\zeta(\cH z) - \mu P_0 z +b \ddot y
\end{equation}
with the same boundary conditions as in Section~\ref{Ssec:MathModel}. %This can be reformulated using a Hamiltonian, resulting in
%\begin{equation*}
%    \partial_t z = -\begin{bmatrix}0&\partial_{\zeta} \\g\partial_\zeta &2\mu H^{-1}\end{bmatrix}\begin{bmatrix}1&0\\0&H\end{bmatrix}z+\begin{pmatrix}0\\-1\end{pmatrix}\ddot{y},%-\big(h_0 \partial_\zeta z_2, g \partial_\zeta z_1 + S'(0) z_2\big)=P_1\partial_\zeta(\cH z) - \mu P_0 z +b \ddot y
%\end{equation*}
The state space in which $z(t)$ evolves is
\[
 \setdef{(z_1,z_2)}{ z_1, (Hz_2) \in L^2([0,1];\R)} = L^2([0,1];\R^2) = X
\]
and the new operator, parameterized by the steady state control value~$U$, is $A_U: \cD(A_U) \subseteq  X \to X$,
\begin{align*}
A_U z &= -\begin{bmatrix}0&H\partial_{\zeta}+(\partial_{\zeta}H) \\g\partial_\zeta &2\mu\end{bmatrix}z,\\
    \cD(A_U) &= \setdef{(z_1,z_2)\in X}{ \!\!\begin{array}{l} z_1,(Hz_2)\in W^{1,2}([0,1];\R),\\ z_2(0) = z_2(1) = 0\end{array}\!\!\!}.
\end{align*}
Additionally we allow for a space-dependent friction term $\mu:[0,1]\to \R_{>0}$ in the following. Then, again invoking the momentum~\eqref{eq:momentum} and observing that by~\eqref{eq:steadyH} and $z_2(t,0) = z_2(t,1) = 0$ we have
\begin{align*}
    &\int_0^1 \!\! \partial_\zeta\big(H(\zeta) z_2(t,\zeta)\big) z_2(t,\zeta)\! \ds{\zeta} \!=\! - \frac12  \int_0^1 \!\! H(\zeta) \partial_\zeta\big( z_2(t,\zeta)^2 \big)\! \ds{\zeta} \\
    & =\! - \frac{U}{2(m+h_0)}  \int_0^1 \zeta \partial_\zeta\big( z_2(t,\zeta)^2 \big) \ds{\zeta} \!=\! \frac{U}{2(m+h_0)} \|z_2(t)\|^2,
\end{align*}
the nonlinear model on the state space $X$ reads
\begin{subequations}\label{eq:InpOut_U}
\begin{align}\label{eq:InpOutlin_U}
%\begin{aligned}
	  \partial_t x &= A_U (x+b\dot{y}) \\%P_1\partial_\zeta\cH (x+b\dot{y}) - \mu P_0 (x +b \dot y)\\
 %   \dot{y}(t)&=x_2(t,0)=x_2(t,1)\label{eq:InpOutlin3} \\
 \!  m \ddot y(t) &= \frac{g}{2}  x_1(t,\cdot)^{2}|_{0}^{1} + {2}\langle x_{1}(t),\mu x_{2}(t)\rangle-2 \dot{y}(t) \langle x_{1}(t),\mu \rangle\notag\\
 &\quad + \frac{U}{2(m+h_0)} \|x_2(t) - \dot y(t)\|^2 +{u(t)} \label{eq:InpOutlin2_U}
\end{align}
\end{subequations}
}
%\begin{align*}
%    \partial_t x_1 &= - \partial_\zeta (Hz_2) = -\partial_\zeta (H x_2) + (\partial_\zeta H) \dot y \\
%    &= -\partial_\zeta (H x_2) - \frac{U}{2(m+h_0)} \dot y
%\end{align*}
\tb{
This generalized system can be approached in a similar way as in Section~\ref{Sec:LinMod}, however, although the steady states are explicitly given by~\eqref{eq:steadyH}, the computations for the transfer function, in particular the crucial Lemma~\ref{lem:measure}, have to be adapted accordingly. These technicalities will not presented here as the authors believe that a more abstract approach for the assessment of {\it bounded-input, bounded-output stability} for linear systems of port-Hamiltonian form, see e.g.~\cite{JacoZwar12}, should be considered. More precisely, it is not hard to show that the linear dynamics in \eqref{eq:InpOut_U}, linking $\dot{y}(t)$ to the spatial boundary values of $x(t)$, can be rewritten in the port-Hamiltonian form
\begin{align*}
\partial_t {x}={}&(P_{1}\partial_{\zeta}-\mathcal{R})\mathcal{H}x\\
\dot{y}(t)={}&W_{B}\begin{bmatrix}f_{\partial}(t)\\e_{\partial}(t)\end{bmatrix},\quad
x(\cdot,t)|_{0,1}=W_{C}\begin{bmatrix}f_{\partial}(t)\\e_{\partial}(t)\end{bmatrix},
\end{align*}
for suitable real matrices $W_{B},W_{C}$, where
$$P_{1}=-\begin{bmatrix}0&1 \\1 &0\end{bmatrix},\ \mathcal{R}=\begin{bmatrix}0&0\\0&2\mu H^{-1}\end{bmatrix}, \ \mathcal{H}=\begin{bmatrix}g^{-1}&0\\0&H\end{bmatrix}$$ and
$$ \begin{bmatrix}f_{\partial}(t)\\e_{\partial}(t)\end{bmatrix}=\frac{1}{\sqrt{2}}\begin{bmatrix}P_{1}&-P_{1}\\I&I\end{bmatrix}\begin{bmatrix} \mathcal{H}(0)x(0,t)\\\mathcal{H}(1)x(1,t)\end{bmatrix}.$$
The abstract characterization of when the corresponding transfer function has a form as in Lemma~\ref{lem:measure} is a subject of future work.
}

%%%%%%%%%%%%%%%%%%%%%%%%%%%%%%%%%%%%%%%%%%%%%%%%%%%%%%%%%%%%%%%%%%%%%%%%%%%%%%%%%%%%%%%%%%%%%%%%%%%%%%%%%%%%%
\section{Simulations}\label{Sec:Sim}
%%%%%%%%%%%%%%%%%%%%%%%%%%%%%%%%%%%%%%%%%%%%%%%%%%%%%%%%%%%%%%%%%%%%%%%%%%%%%%%%%%%%%%%%%%%%%%%%%%%%%%%%%%%%%
%

In this section we illustrate the application of the funnel controller~\eqref{eq:fun-con} to the system~\eqref{eq:InpOut}.
 %In the following we present the numerical method used to simulate the corresponding closed-loop system.
 Using the change of variables $z(t,\zeta)=Q\begin{smallpmatrix} \eta_1(t,\zeta)\\ \eta_2(t,\zeta)\end{smallpmatrix}$ with $Q:=\begin{smallbmatrix}1 & 1\\ \frac{g}{c} & -\frac{g}{c} \end{smallbmatrix}$ in~\eqref{eq:SVlin} enables us to solve the PDE corresponding to~$\eta_1$ with an implicit finite difference method and the PDE corresponding to~$\eta_2$ with an explicit finite difference method, respectively.
%$$z(\zeta,t)=Q\eta(\zeta,t),\quad \text{where}\quad
%Q:=\begin{bmatrix}1 & 1\\
%\dfrac{g}{c} & -\dfrac{g}{c}
%\end{bmatrix},
%$$
%we may rewrite~\eqref{eq:InpOutlin} as
%\begin{align*}
%\partial_t\eta_1(t,\zeta)+c\partial_\zeta\eta_1(t,\zeta)&=\mu(\eta_2(t,\zeta)-\eta_1(t,\zeta))-\dfrac{c}{2g}\ddot{y}(t),\\
%\partial_t\eta_2(t,\zeta)-c\partial_\zeta\eta_2(t,\zeta)&=\mu(\eta_1(t,\zeta)-\eta_2(t,\zeta))+\dfrac{c}{2g}\ddot{y}(t),\\
%\ddot{y}(t)-\dfrac{u(t)}{m}&=\dfrac{2\mu c}{m}\int_0^1{\eta_1(t,\zeta)-\eta_2(t,\zeta)\dif\zeta}\\
%&\quad +\dfrac{2c^2}{m}(\eta_1(t,1)-\eta_1(t,0))\\
%&\quad +\dfrac{2g}{m}(\eta_1(t,1)^2-\eta_1(t,0)^2)\\
%&\quad +\frac{2\mu g}{mc}\int_0^1\eta_1(t,\zeta)^2-\eta_2(t,\zeta)^2\ds{\zeta},\\
%\eta_1(t,0)&=\eta_2(t,0),\\
%\eta_1(t,1)&=\eta_2(t,1).
%\end{align*}
%Using an implicit method for the PDE corresponding to~$\eta_1$ and an explicit method for the PDE corresponding to~$\eta_2$ we can easily solve the closed-loop system using finite differences.
For the simulation we have used the parameters $m=1{\rm kg}$, $h_0=0.5{\rm m}$, $g=9.81{\rm ms^{-2}}$, $\mu=0.1{\rm Hz}$ and the reference signal $y_{\rm ref}(t)=\tanh^2(\omega t)$ with $\omega=0.06\pi f$, $f=\sqrt{g/h_0}$. %where $A=1{\rm m}$ and $\omega=2\pi f$ with $f=\sqrt{h_0/g}$.
The initial values~\eqref{eq:IClin} are chosen as $x_0(\zeta)=(h_0,0.1\sin^2(4\pi\zeta){\rm ms^{-1}})$
and $(y^0,y^1)=(0{\rm m},0{\rm ms^{-1}})$. For the controller~\eqref{eq:fun-con} we chose the funnel functions $\varphi_0(t)=\varphi_1(t)=100 \tanh(\omega t)$. Clearly, Condition~\eqref{eq:initialerror} is satisfied.
For the finite differences we used a grid in $t$ with $M=2000$ points for the interval $[0,2\tau]$ with $\tau=f^{-1}$, and a grid in $\zeta$ with $N=\lfloor ML/(4c\tau)\rfloor$ points. The method has been implemented in Python and the simulation results are shown in Figs.~\ref{fig:sim-y} and~\ref{fig:sim-e}.
\begin{figure}[h!tb]
\centering
  \includegraphics[trim=0 30 0 0,clip,width=7cm]{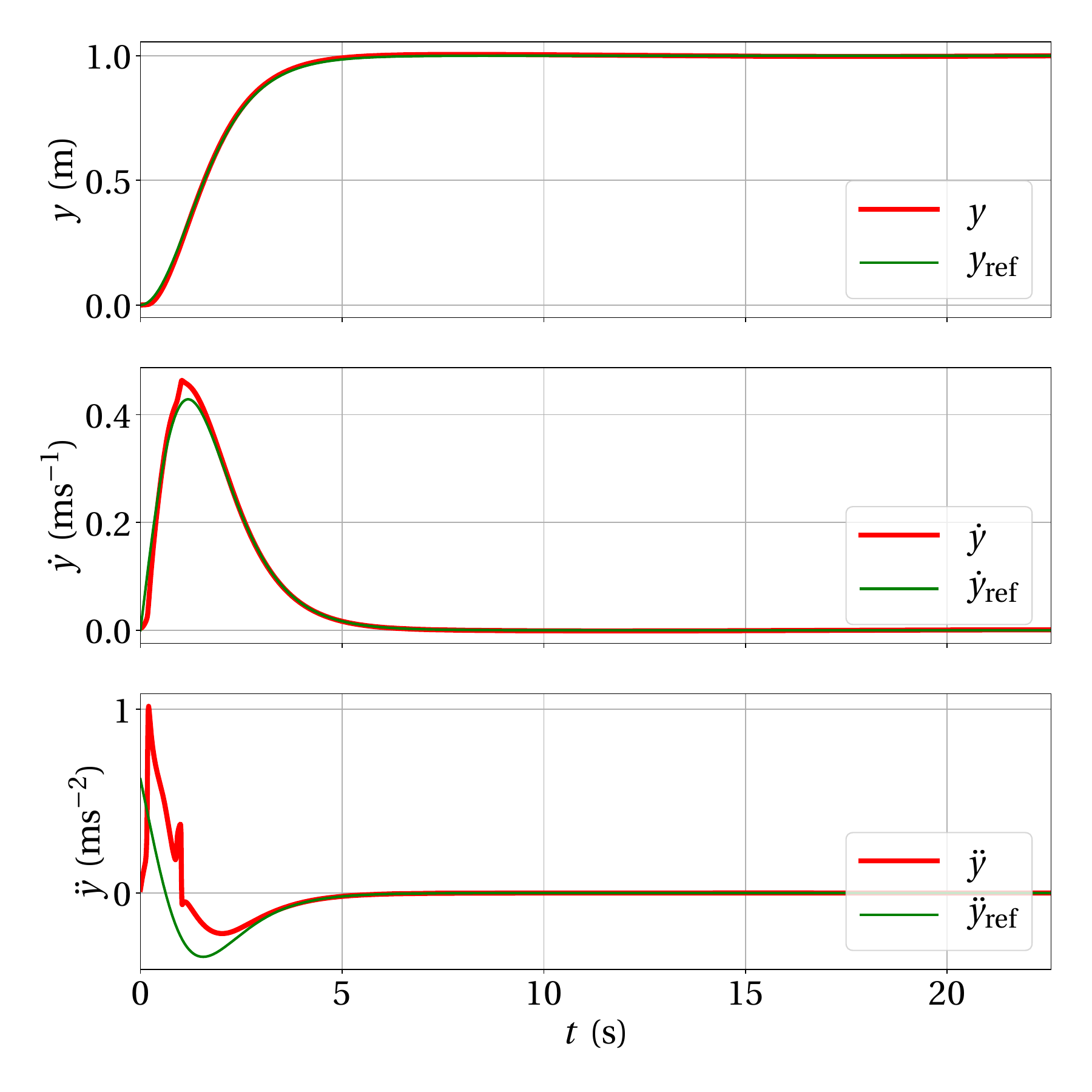}
\caption{\small Output~$y$, reference signal~$y_{\rm ref}$ and corresponding first and second derivatives.}
\label{fig:sim-y}
\end{figure}
\begin{figure}[h!tb]
\centering
  \includegraphics[trim=0 30 0 0,clip,width=7cm]{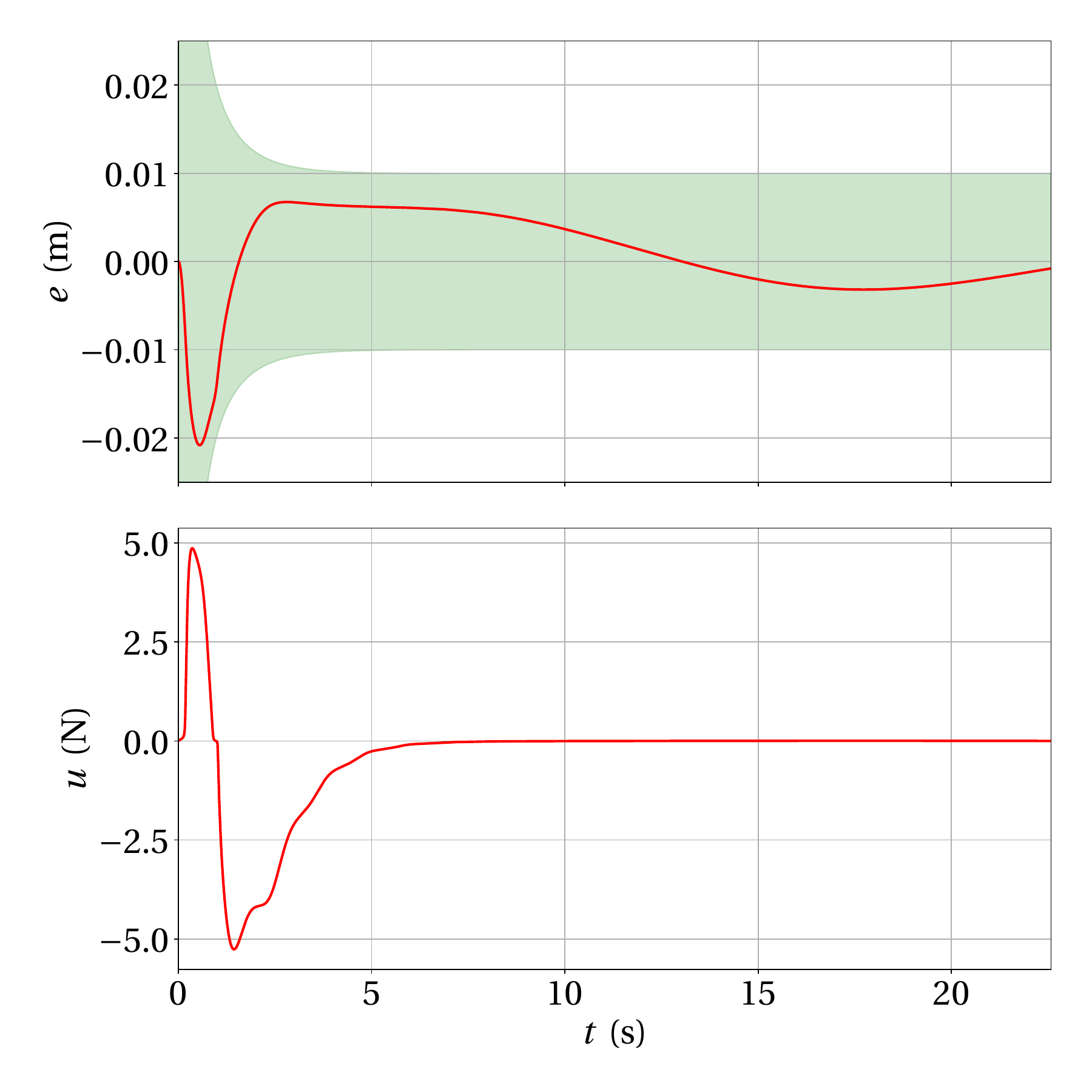}
\caption{\small Performance funnel with tracking error~$e$ and input~$u$.}
\label{fig:sim-e}
\end{figure}

It can be seen that even in the presence of sloshing effects a prescribed performance of the tracking error can be achieved with the funnel controller~\eqref{eq:fun-con}, while at the same time the generated input is bounded and exhibits an acceptable performance.
%{\color{green} We like to stress that the sloshing effects are theoretically substantiated by the part $\mathfrak{h}_\delta$ of the inverse Laplace transform of the transfer function derived in Lemma~\ref{lem:measure}. An inspection of the proof of Proposition~\ref{Prop:bound-T} reveals that $\mathfrak{h}_\delta \ast \eta$ explicitly appears in~$\ddot y$; the resulting decaying impulses can be seen in Fig.~\ref{fig:sim-y}.} %Note that this accounts for the typical non-smoothness of the solutions of the Saint-Venant equations.}
%\tb{TB: Den vorangehenden grünen Text würde ich entfernen, da er nun nicht mehr zu den Bildern passt.}
Finally, we demonstrate that the controller~\eqref{eq:fun-con} is also feasible for the nonlinear Saint-Venant equations~\eqref{eq:SVeq} in certain situations. For purposes of illustration we consider a friction term of the form $S(z) = C_D z + C_S z^2$ with $C_D = 2\mu$ and $C_S = 1$. Analogously as for the linearized model, utilizing the momentum leads to the equation
\begin{equation}\label{eq:InpOutNonlin}
\begin{aligned}
  m \ddot y(t) &= u(t) + C_D \int_0^1 h(t,\zeta)v(t,\zeta) \ds{\zeta} \\
  &\quad + C_S\int_0^1 v(t,\zeta)^2 \ds{\zeta} +  \frac{g}{2}\big( h(t,1)^2-h(t,0)^2\big),
\end{aligned}
\end{equation}
which is used for the simulation instead of~\eqref{eq:InpOutlin2} by applying the same method as described above. We choose the same parameters as in the first simulation, except for $\mu=0.01\, \mathrm{Hz}$, $\omega=0.025\, \mathrm{Hz}$ and $\varphi_0(t)=\varphi_1(t)=10 \tanh(\omega t)$. We compare simulations of the nonlinear Saint-Venant equations~\eqref{eq:SVeq} with initial values $\big(h(0,\zeta),v(0,\zeta)\big)=(h_0,0{\rm ms^{-1}})$ under control~\eqref{eq:fun-con} with the linearized  equations~\eqref{eq:InpOutlin} with initial values $x_0(\zeta)=(h_0,0{\rm ms^{-1}})$ under control~\eqref{eq:fun-con};
the results are shown in Figs.~\ref{fig:nl-sim-y} and~\ref{fig:nl-sim-e}. Note that compared to the first simulation, lower frequencies~$\mu$ and~$\omega$ and a zero initial velocity are chosen here.
}

%\vspace{-3mm}

\begin{figure}[h!tb]
\centering
  \includegraphics[trim=0 30 50 70,clip,width=7cm]{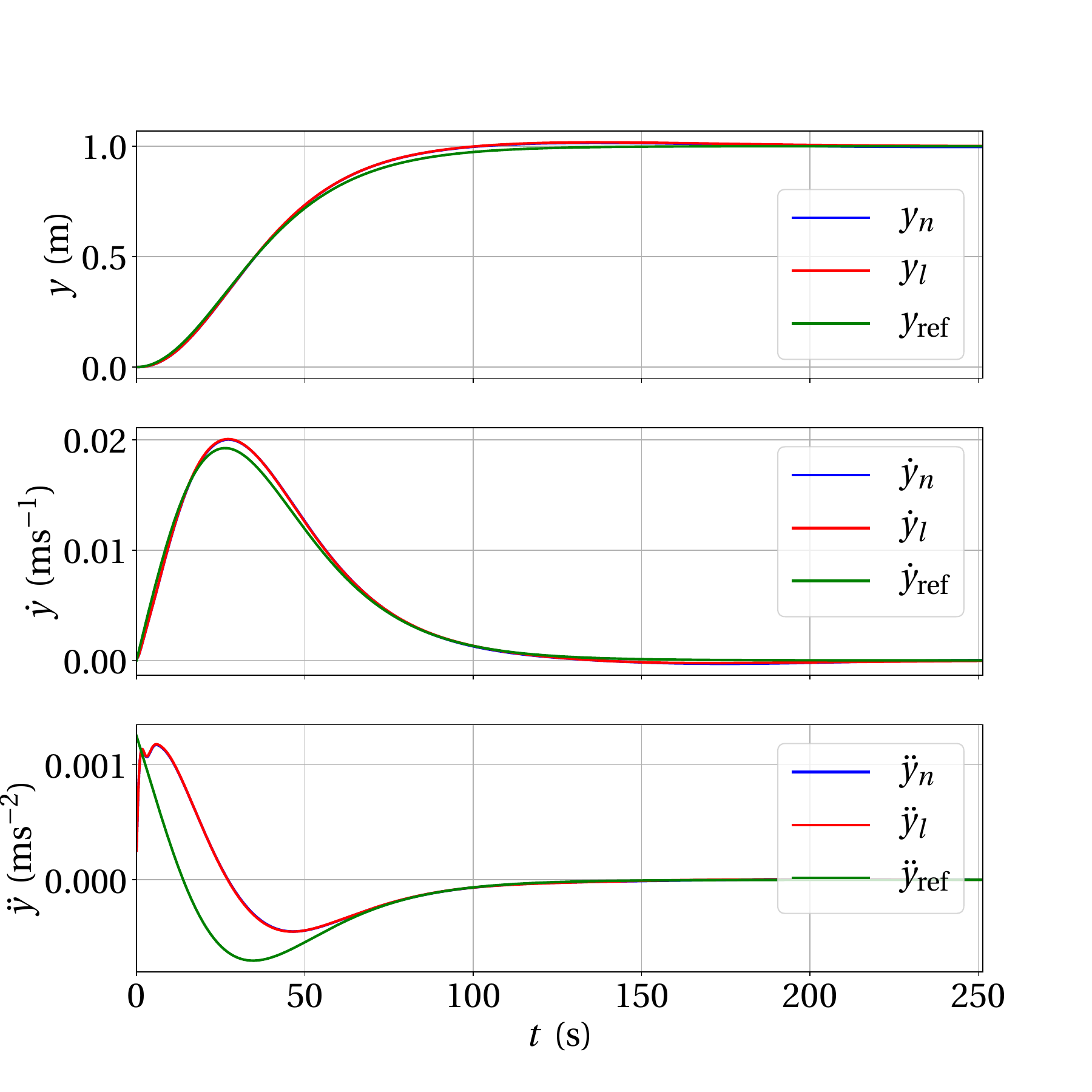}
\caption{\small Reference signal~$y_{\rm ref}$, outputs~$y_n$ for~\eqref{eq:SVeq},~\eqref{eq:fun-con} and~$y_l$ for~\eqref{eq:SVlin},~\eqref{eq:fun-con} and corresponding first and second derivatives.}
\label{fig:nl-sim-y}
\end{figure}

%\vspace{-3mm}

\begin{figure}[h!tb]
\centering
  \includegraphics[trim=0 30 50 70,clip,width=7cm]{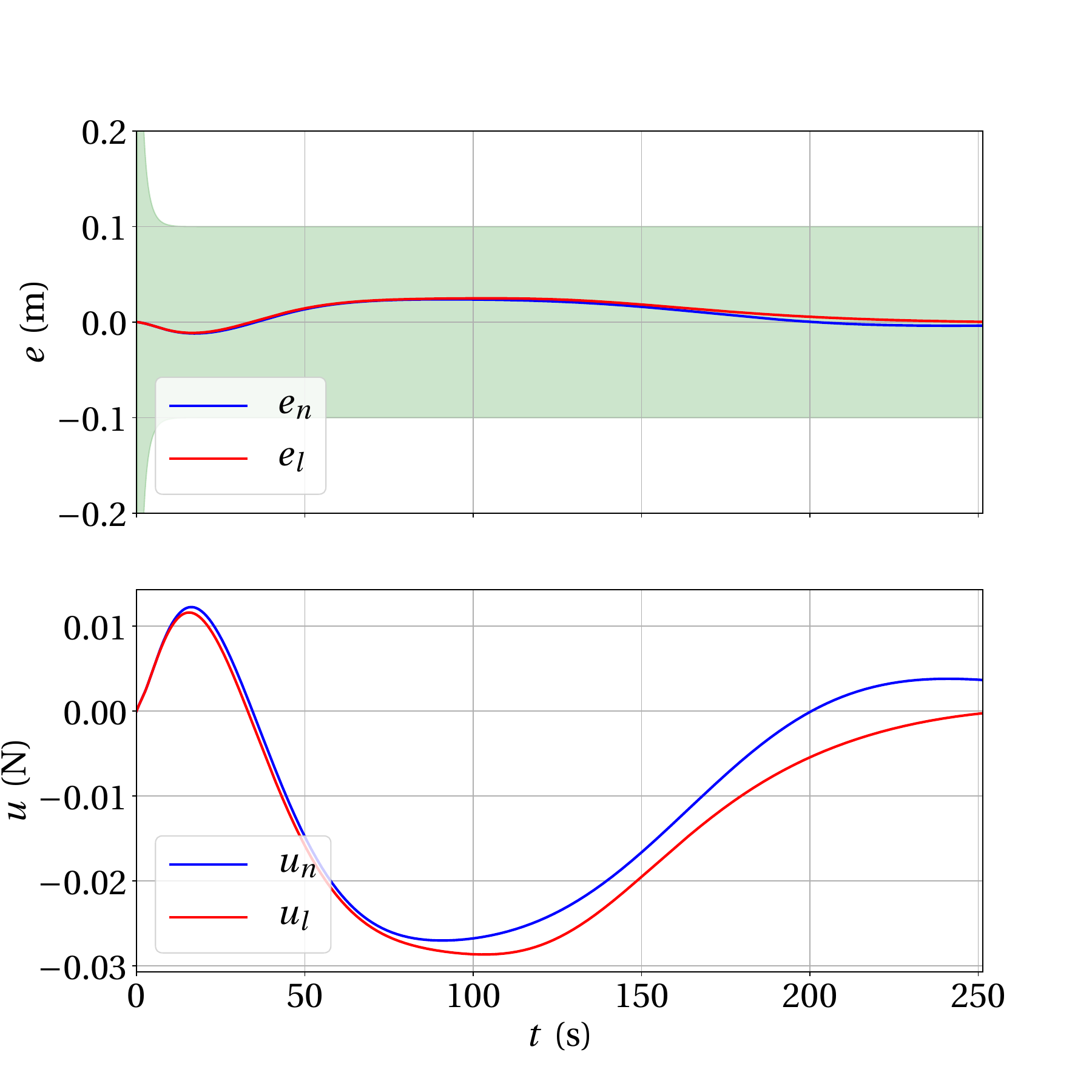}
\caption{\small Performance funnel with tracking errors~$e_n$ and~$e_l$ and inputs~$u_n$ and~$u_l$ for~\eqref{eq:SVeq},~\eqref{eq:fun-con} and~\eqref{eq:SVlin},~\eqref{eq:fun-con}, respectively.}
\label{fig:nl-sim-e}
\end{figure}
%%%%%%%%%%%%%%%%%%%%%%%%%%%%%%%%%%%%%%%%%%%%%%%%%%%%%%%%%%%%%%%%%%%%%%%%%%%%%%%%%%%%%%%%%%%%%%%%%%%%%%%%%%%%%
%\section{}\label{Sec:2dimModel}
%%%%%%%%%%%%%%%%%%%%%%%%%%%%%%%%%%%%%%%%%%%%%%%%%%%%%%%%%%%%%%%%%%%%%%%%%%%%%%%%%%%%%%%%%%%%%%%%%%%%%%%%%%%%%
%
%
%%%%%%%%%%%%%%%%%%%%%%%%%%%%%%%%%%%%%%%%%%%%%%%%%%%%%%%%%%%%%%%%%%%%%%%%%%%%%%%%%%%%%%%%%%%%%%%%%%%%%%%%%%%%%

\section{Conclusion}\label{Sec:Concl}

In the present paper we have shown that the controller~\eqref{eq:fun-con} is feasible for the moving water tank system~\eqref{eq:InpOut} which rests on the linearized Saint-Venant equations. We stress that the system is still nonlinear as the (linearized) sloshing effects influence the position of the cart through the momentum, which constitutes an internal feedback loop in~\eqref{eq:InpOut}.  %and feed back force.
%Even in the linearized case the motion of the fluid affects the dynamics of the overall system which leads to the effect of sloshing.
Nevertheless, the funnel controller is able to handle these effects as shown in Theorem~\ref{Thm:funcon-PDE} and in the simulations in Section~\ref{Sec:Sim}.

We stress that the applicability of the results from~\cite{BergLe18a,BergPuch20a} on funnel control strongly rests  on the fact that the original open-loop system can be viewed as an ODE-PDE coupling with an input-output relation allowing for a relative degree, i.e., the form~\eqref{eq:nonlSys} mentioned in Section~\ref{Ssec:ContrObj}. This, however, is in general not the case for systems governed by evolution equations and different approaches are required then, see~\cite{BergBrei21,PuchReis19pp,ReisSeli15b}.
Furthermore, we like to point out that the controller~\eqref{eq:fun-con} requires that the derivative of the output is available for control. This may not be true in practice, and it may even be hard to obtain suitable estimates of the output derivative. This drawback may be resolved by combining the controller~\eqref{eq:fun-con} with a funnel pre-compensator as developed in~\cite{BergReis18a, BergReis18b}, which results in a pure output feedback.

\tb{Some extensions of the results, such as linearizations around other steady states and space-dependent friction, have been discussed in Section~\ref{Sec:Ext}, but a complete study is subject of future work.} Other extensions of~\eqref{eq:InpOut} which may be considered in future research are e.g.\ sloshing suppressing valves inside the tank, the interconnection of the tank with a truck as in~\cite{GerdKimm15} and, of course, the general nonlinear equations~\eqref{eq:SVeq} as well as the higher-dimensional case.
%Since the system~\eqref{eq:InpOutlin} is one example for a real-world system belonging to the large class of functional differential equations considered in~\cite{BergLe18a}, it is also an interesting topic for future research to investigate for which classes of systems modelled by PDEs funnel control is feasible.

Another issue is that we assume $\mu >0$ for the friction term. This implies that the system's energy converges to the steady state exponentially. In the case $\mu=0$ the statement of Theorem~\ref{Thm:funcon-PDE} is false in general. More precisely, if $\mu=0$, then $\mathfrak{h} = \cL^{-1}(\mathbf{H})$ derived in Lemma~\ref{lem:measure} does not have bounded total variation, by which $\widetilde{\mathcal{T}}$ from the proof of Theorem~\ref{Thm:funcon-PDE} is not bounded-input, bounded-output stable. \tb{This is consistent with the results from~\cite{DuboPeti99}, where it is shown that the linearized Saint-Venant equations (without damping) are not stabilizable. Nevertheless, as suggested by the findings in~\cite{PrieHall04}, the nonlinear model consisting of~\eqref{eq:SVeq} together with~\eqref{eq:InpOutNonlin} may still have a solution under the control~\eqref{eq:fun-con} in the case $S=0$.}

It seems natural to assume some kind of damping in~\eqref{eq:SVeq}, but one may relax the assumption of exponential stability, even in the linearized case. This requires refined methods, whose development is a topic of future research. In this context, let us mention the recent work~\cite{PeiTucs20}, where stabilization of a linearized 2D-shallow water system subject to polynomial damping was considered.

\section*{Acknowledgements}
We thank T.~Reis (U Hamburg) for fruitful discussions.

\bibliographystyle{elsarticle-harv}
%\bibliography{MST}

\end{document}